\pdfoutput=1
\RequirePackage{ifpdf}
\ifpdf 
\documentclass[pdftex]{sigma}
\else
\documentclass{sigma}
\fi

\numberwithin{equation}{section}

\newtheorem{Theorem}{Theorem}[section]
\newtheorem{Corollary}[Theorem]{Corollary}
\newtheorem{Lemma}[Theorem]{Lemma}
\newtheorem{Proposition}[Theorem]{Proposition}
 { \theoremstyle{definition}
\newtheorem{Example}[Theorem]{Example}
\newtheorem{Remark}[Theorem]{Remark} }

\usepackage{bm}

\DeclareMathAlphabet\EuRoman{U}{eur}{m}{n}
\SetMathAlphabet\EuRoman{bold}{U}{eur}{b}{n}
\newcommand{\eurom}{\EuRoman}

\renewcommand{\d}{\mathrm{d}}

\newcommand{\cI}{{\mathcal I}}
\newcommand{\cJ}{{\mathcal J}}
\newcommand{\cV}{{\mathcal V}}
\newcommand{\bbR}{{\mathbb R}}

\newcommand{\bbP}{{\mathbb P}}

\newcommand{\GL}{\operatorname{GL}}

\newcommand{\SO}{\operatorname{SO}}

\newcommand{\Ric}{\operatorname{Ric}}

\newcommand{\euso}{\operatorname{\frak{so}}}
\newcommand{\eugl}{\operatorname{\frak{gl}}}

\newcommand{\mba}{\mathbf{a}}
\newcommand{\mbb}{\mathbf{b}}
\newcommand{\mbe}{\mathbf{e}}
\newcommand{\mbg}{\mathbf{g}}
\newcommand{\mbf}{\mathbf{f}}
\newcommand{\mbh}{\mathbf{h}}
\newcommand{\mbk}{\mathbf{k}}

\newcommand{\mbu}{\mathbf{u}}
\newcommand{\mbv}{\mathbf{v}}
\newcommand{\mbw}{\mathbf{w}}
\newcommand{\mbx}{\mathbf{x}}
\newcommand{\mby}{\mathbf{y}}
\newcommand{\mbz}{\mathbf{z}}
\newcommand{\euroa}{\bm{\eurom{a}}}
\newcommand{\eurof}{\bm{\eurom{f}}}
\newcommand{\eurox}{\bm{\eurom{x}}}

\DeclareMathOperator{\tr}{tr}

\DeclareMathOperator{\Hom}{Hom}

\newcommand{\phm}{\phantom{-}}

\newcommand{\lhk}{\mathbin{\hbox{\vrule height1.4pt width4pt depth-1pt
 \vrule height4pt width0.4pt depth-1pt}}}

\begin{document}
\allowdisplaybreaks

\newcommand{\arXivNumber}{1908.01041}

\renewcommand{\PaperNumber}{004}

\FirstPageHeading

\ShortArticleName{Flat Metrics with a Prescribed Derived Coframing}

\ArticleName{Flat Metrics with a Prescribed Derived Coframing}

\Author{Robert L.~BRYANT~$^\dag$ and Jeanne N.~CLELLAND~$^\ddag$}

\AuthorNameForHeading{R.L.~Bryant and J.N.~Clelland}

\Address{$^\dag$~Duke University, Mathematics Department, P.O. Box 90320, Durham, NC 27708-0320, USA}
\EmailD{\href{mailto:bryant@math.duke.edu}{bryant@math.duke.edu}}

\Address{$^\ddag$~Department of Mathematics, 395 UCB, University of Colorado, Boulder, CO 80309-0395, USA}
\EmailD{\href{mailto:Jeanne.Clelland@colorado.edu}{Jeanne.Clelland@colorado.edu}}

\ArticleDates{Received August 28, 2019, in final form January 09, 2020; Published online January 20, 2020}

\Abstract{The following problem is addressed: A $3$-manifold~$M$ is endowed with a~triple $\Omega = \big(\Omega^1,\Omega^2,\Omega^3\big)$ of closed $2$-forms. One wants to construct a coframing~$\omega = \big(\omega^1,\omega^2,\omega^3\big)$ of~$M$ such that, first, ${\rm d}\omega^i = \Omega^i$ for $i=1,2,3$, and, second, the Riemannian metric $g=\big(\omega^1\big)^2+\big(\omega^2\big)^2+\big(\omega^3\big)^2$ be flat. We show that, in the `nonsingular case', i.e., when the three $2$-forms~$\Omega^i_p$ span at least a $2$-dimensional subspace of~$\Lambda^2(T^*_pM)$ and are real-analytic in some $p$-centered coordinates, this problem is always solvable on a neighborhood of~$p\in M$, with the general solution~$\omega$ depending on three arbitrary functions of two variables. Moreover, the characteristic variety of the generic solution~$\omega$ can be taken to be a nonsingular cubic. Some singular situations are considered as well. In particular, we show that the problem is solvable locally when $\Omega^1$, $\Omega^2$, $\Omega^3$ are scalar multiples of a single 2-form that do not vanish simultaneously and satisfy a nondegeneracy condition. We also show by example that solutions may fail to exist when these conditions are not satisfied.}

\Keywords{exterior differential systems; metrization}

\Classification{53A55; 53B15}

\section{Introduction}\label{sec: intro}

\subsection{The problem} Given a $3$-manifold~$M$
and a triple~$\Omega = \big(\Omega^1,\Omega^2,\Omega^3\big)$
of closed $2$-forms on~$M$,
it is desired to find a coframing~$\omega = \big(\omega^1,\omega^2,\omega^3\big)$
(i.e., a triple of linearly independent $1$-forms)
satisfying the first-order differential equations
\begin{gather}\label{eq: first-order DEs}
\d\omega^i = \Omega^i
\end{gather}
and the second-order equations that ensure that the metric
\begin{gather}\label{eq: definition of g}
g = \big(\omega^1\big)^2+\big(\omega^2\big)^2+\big(\omega^3\big)^2
\end{gather}
be flat.

This question was originally posed in the context of a problem regarding `residual stress' in elastic bodies due to defects, where the existence of solutions to equations \eqref{eq: first-order DEs} and \eqref{eq: definition of g} is related to the existence of residually stressed bodies that also satisfy a global energy minimization condition. (See~\cite{Acharya} for more details.) However, we feel that the problem is of independent geometric interest.

\subsection{Initial discussion}
As posed, this problem becomes an overdetermined system
of equations for the coframing~$\omega$,
which, in local coordinates~$\big(u^1,u^2,u^3\big)$, can be specified
by choosing the $9$ coefficient functions~$a^i_j(u)$
in the expansion~$\omega^i = a^i_j(u) \d u^j$.
Indeed,~\eqref{eq: first-order DEs}
is a system of $9$ first-order equations
while the flatness of the metric $g$
as defined in~\eqref{eq: definition of g}
is the system of $6$ second-order equations $\Ric(g) = 0$.
Together, these constitute a system of $15$
partial differential equations on the coefficients $a^i_j$
that are independent in the sense that no one of them is a combination of derivatives of the others.

However, the problem can be recast into a different form that makes it more tractable. For simplicity, we will assume that $M$ is connected and simply-connected. The condition that the $\mathbb{R}^3$-valued $1$-form~$\omega$ define a flat metric $g = {}^t\omega \circ \omega$ is then well-known to be equivalent to the condition that~$\omega$
be representable as
\begin{gather*}
\omega = \mba^{-1} \d \mbx,
\end{gather*}
where $\mbx\colon M\to\mathbb{R}^3$ is an immersion and $\mba\colon M\to\SO(3)$ is a smooth mapping.\footnote{In this note, we regard $\mathbb{R}^3$ as \emph{columns}
of real numbers of height $3$, though we will, from time to time, without comment, write them as row vectors in the text.}
This representation is unique up to a replacement of the form
\begin{gather*}
(\mbx,\mba)\mapsto (\mbx',\mba') = (R\mbx + T, R\mba),
\end{gather*}
where $T\in\mathbb{R}^3$ is a constant and $R\in\SO(3)$ is a constant.

Since $\SO(3)$ has dimension~$3$, specifying a pair $(\mbx,\mba)\colon M\to\mathbb{R}^3\times\SO(3)$ is, locally, a choice of $6$ arbitrary (smooth) functions on~$M$.
The remaining conditions on~$\omega$ needed to solve our problem,
\begin{gather}\label{eq: 2_form_eqs}
\d\big(\mba^{-1} \d \mbx\big) = -\mba^{-1} \d \mba \wedge \mba^{-1} \d \mbx = \Omega,
\end{gather}
still constitute $9$ independent first-order equations
for the `unknowns'~$(\mbx,\mba)$ (which are essentially~$6$ in number),
but these equations are not fully independent:
 $\d\Omega=0$ by hypothesis,
and the exterior derivatives of the three $2$-forms
on the left hand side of~\eqref{eq: 2_form_eqs}
also vanish identically for any pair~$(\mbx,\mba)$,
which provides $3$ `compatibility conditions' for the $9$ equations,
thereby, at least formally,
restoring the `balance' of $6$ equations for $6$ unknowns.
Thus, this rough count gives some indication that the problem might
be locally solvable.

However, caution is warranted.
Let $(\bar \mbx,\bar \mba)\colon M\to\mathbb{R}^3\times\SO(3)$
be a~smooth mapping and let~$\bar\Omega = \d\big(\bar \mba^{-1} \d \bar \mbx\big)$.
Linearizing the equations~\eqref{eq: 2_form_eqs}
at the `solution' $(\mbx,\mba) = (\bar \mbx,\bar \mba)$
yields a~system of differential equations of the form
\begin{gather}\label{eq: linearizedeqs}
\d\big( \bar \mba^{-1} (\d \mby - \mbb \d\bar \mbx )\big) = \Psi,
\end{gather}
where \looseness=-1 $(\mby,\mbb)\colon M\to \mathbb{R}^3\oplus \euso(3)$ are unknowns
and $\Psi$ is a closed $2$-form with values in $\mathbb{R}^3$.
If one were expecting~\eqref{eq: 2_form_eqs} to always be solvable,
one might na{\"\i}vely expect \eqref{eq: linearizedeqs} to always
be solvable as well, but this is not so:
When one linearizes at $(\bar \mbx,\bar \mba) = (\bar \mbx, I_3)$,
the linearized system reduces to
\begin{gather}\label{eq: linearizedeqsa=I}
 - \d \mbb\wedge\d\bar \mbx = \Psi,
\end{gather}
where $\mbb\colon M\to\euso(3)\simeq\mathbb{R}^3$
is essentially a set of $3$ unknowns
and $\Psi$ is a given closed $2$-form with values in $\mathbb{R}^3$.
However, as is easily seen, the solvability of~\eqref{eq: linearizedeqsa=I}
for $\mbb$ imposes a system of~$9$ independent first-order
linear equations on~$\Psi$, while the closure of $\Psi$
is only a subsystem of $3$ independent first-order linear equations on~$\Psi$.

Thus, some care needs to be taken in analyzing the system.
Indeed, as Example~\ref{ex: nonsolvability} in~Section~\ref{sec: Zrank1} shows,
there exists an~$\Omega$ defined on a neighborhood of the origin in~$\mathbb{R}^3$
for which there is no solution~$\omega = \mba^{-1} \d \mbx$
to the system~\eqref{eq: 2_form_eqs} on an open neighborhood of the origin.

\subsection{An exterior differential system}
The above observation suggests formulating the problem
as an exterior differential system~$\cI$
on~$X = M\times\mathbb{R}^3\times\SO(3)$
that is generated by the three $2$-form components
of the closed $2$-form
\begin{gather}\label{eq: Theta_defined}
\Theta = -\euroa^{-1} \d \euroa \wedge \euroa^{-1} \d \eurox - \Omega,
\end{gather}
where now, one regards $\eurox\colon X\to\mathbb{R}^3$ and $\euroa\colon X\to\SO(3)$
as projections on the second and third factors.\footnote{We use a different font in equation \eqref{eq: Theta_defined} to emphasize that $\euroa$, $\eurox$, etc., denote matrix- and vector-valued coordinate functions on $X$, while $\mba$, $\mbx$, etc., denote matrix- and vector-valued functions on $M$. We use $\Omega$ to denote both the 2-form on $\mathbb{R}^3$ and its pullback to $X$ via the projection map $\eurox\colon X\to\mathbb{R}^3$.}

We will show that, when~$\Omega$ is suitably nondegenerate,
this exterior differential system is involutive, i.e.,
it possesses Cartan-regular integral flags at every point.
In particular, if~$\Omega$ is also real-analytic,
the Cartan--K\"ahler theorem will imply that the original problem
is locally solvable.

\subsection{Background}
For the basic concepts and results
from the theory of exterior differential systems
that will be needed in this article,
the reader may consult Chapter III of~\cite{BCGGG}. The book \cite{CFB} may also be of interest.

\section{Analysis of the exterior differential system}\label{EDS-sec}

\subsection{Notation}
Define an isomorphism~$[{\cdot}]\colon \mathbb{R}^3\to\euso(3)$
(the space of $3$-by-$3$ skew-symmetric matrices) by the formula
\begin{gather*}
\left[ \mbx \right]
= \left[\begin{pmatrix}x^1\\x^2\\x^3\end{pmatrix}\right]
= \begin{pmatrix}
 0 & \phm x^3 & -x^2\\
 -x^3 & 0 & \phm x^1\\
 \phm x^2 & -x^1 & 0
 \end{pmatrix}.
\end{gather*}
The identity $[\mba \mbx] = \mba[\mbx]\mba^{-1}$,
which holds for all $\mba\in\SO(3)$ and $\mbx\in\mathbb{R}^3$,
will be useful, as will the following identities
for $\mbx, \mby\in\mathbb{R}^3$;
$A$ a $3$-by-$3$ matrix with real entries;
$\alpha$ and $\beta$ $1$-forms with values in $\mathbb{R}^3$;
and $\gamma$ a $1$-form with values in $3$-by-$3$ matrices:
\begin{gather}
 [\mbx] \mby = - [\mby] \mbx,\nonumber\\
[A\mbx] = (\tr A) [\mbx] - {}^t\!A [\mbx] - [\mbx] A,\nonumber\\
[\mbx][\mby] = \mby {}^t\mbx - {}^t\mbx \mby I_3,\nonumber\\
[\alpha ]\wedge\beta = [\beta]\wedge\alpha,\nonumber\\
[\gamma\wedge\alpha] = (\tr\gamma)\wedge[\alpha]-{}^t\gamma\wedge[\alpha]+[\alpha]\wedge\gamma,\nonumber\\
[\alpha]\wedge [\beta] = {}^t\beta\wedge\alpha I_3 - \beta\wedge {}^t\alpha,\nonumber\\
{}^t\alpha\wedge[\alpha]\wedge\alpha = - 6 \alpha^1\wedge\alpha^2\wedge\alpha^3,\nonumber\\
[A \alpha]\wedge\alpha = \tfrac12\bigl((\tr A) I_3 - {}^t\!A\bigr) [\alpha]\wedge\alpha.\label{eq: crossproductidentities}
\end{gather}
There is one more identity along these lines that will be useful. It is valid for all $\mathbb{R}^3$-valued $1$-forms~$\alpha$ and functions~$A$ with values in $\GL(3,\mathbb{R})$:
\begin{gather*}
[A\alpha]\wedge A\alpha = \det(A) \big({}^t\!A\big)^{-1} [\alpha]\wedge\alpha.
\end{gather*}

On~$\bbR^3\times\SO(3)$ with first and second factor projections~$\eurox\colon \bbR^3\times\SO(3)\to\bbR^3$ and $\euroa\colon \bbR^3\times\SO(3)\to\SO(3)$,
define the $\mathbb{R}^3$-valued $1$-forms $\xi$ and $\alpha$ by
\begin{gather} \label{basis-forms-on-R3xSO3}
\xi = \euroa^{-1} \d \eurox
\qquad\text{and}\qquad
[\alpha] = \euroa^{-1} \d \euroa = \begin{pmatrix}
0 & \phm\alpha^3 & -\alpha^2\\
-\alpha^3 & 0 & \phm\alpha^1\\
\phm\alpha^2 & -\alpha^1 & 0
\end{pmatrix}.
\end{gather}
These $1$-forms satisfy the so-called `structure equations', i.e., the identities
\begin{gather} \label{structure-eqs-on-R3xSO3}
\d\xi = -[\alpha]\wedge\xi
\qquad\text{and}\qquad
\d\alpha = -\tfrac12 [\alpha]\wedge\alpha.
\end{gather}

\subsection{Formulation as an exterior differential systems problem}
Now suppose that, on~$M^3$, there is specified an $\mathbb{R}^3$-valued, closed $2$-form~$\Omega=\big(\Omega^i\big)$. Choose an $\mathbb{R}^3$-valued coframing~$\eta = (\eta^i)\colon TM\to\bbR^3$.
Then one can write
\begin{gather*}
\Omega = \tfrac12 Z [\eta]\wedge\eta ,
\end{gather*}
where $Z$ is a function on~$M$ with values in $3$-by-$3$ matrices.

Let $\cI$ be the exterior differential system on $X^9 = M\times\mathbb{R}^3\times\SO(3)$ that is generated by the three components of the closed $2$-form
\begin{gather*}
\Theta = \d\xi - \Omega = -[\alpha]\wedge\xi - \tfrac12 Z [\eta]\wedge\eta.
\end{gather*}

\begin{Proposition}\label{prop: formulation_as_EDS} If $N^3\subset X$ is an integral manifold of~$\cI$ to which~$\eta$ and~$\xi$ pull back to be coframings, then each point of~$N^3$ has an open neighborhood that can be written as a graph
\begin{gather}\label{eq: xa-graphinX}
\bigl\{\bigl(p,\mbx(p),\mba(p)\bigr)\, \vrule \, p\in U \bigr\}\subset X
\end{gather}
for some open set~$U\subset M$ and smooth maps~$\mbx\colon U\to\mathbb{R}^3$
and $\mba\colon U\to\SO(3)$. Moreover, on~$U$,
the coframing~$\omega = \mba^{-1} \d \mbx$ satisfies~$\d\omega=\Omega$
and the metric~$g = {}^t\omega\circ \omega = {}^t\d \mbx\circ \d \mbx$ is flat.

Conversely, if $U\subset M$ is a simply-connected open subset on which there exists a coframing~$\omega\colon TU\to\bbR^3$ satisfying $(i)$ $\d\omega=\Omega$, and $(ii)$ the metric~$g={}^t\omega\circ \omega$ be flat, then there exist mappings~$\mbx\colon U\to\bbR^3$ and $\mba\colon U\to\SO(3)$ such that $\omega = \mba^{-1} \d \mbx$. Moreover, the immersion $\iota\colon U\to X$ defined by $\iota(p) = \bigl(p,\mbx(p),\mba(p)\bigr)$ is an integral manifold of~$\cI$ that pulls~$\eta$ and~$\xi$ back to be coframings of~$U$.
\end{Proposition}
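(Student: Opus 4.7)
The key observation is that the hypothesis that $\eta$ pulls back to a coframing on~$N^3$ forces the projection $\pi_M\colon N\to M$ to have everywhere maximal rank, since $\eta$ itself is the pullback of a coframing on~$M$. Hence $\pi_M|_N$ is a local diffeomorphism, and a neighborhood of any point of~$N$ can be written as the graph of smooth maps $\mbx\colon U\to\mathbb{R}^3$ and $\mba\colon U\to\SO(3)$ over an open set $U\subset M$, which is exactly the representation~\eqref{eq: xa-graphinX}. On this graph, $\xi=\euroa^{-1}\d\eurox$ pulls back to $\mba^{-1}\d\mbx$, so the vanishing of $\Theta=\d\xi-\Omega$ on~$N$ yields $\d(\mba^{-1}\d\mbx)=\Omega$. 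Setting $\omega=\mba^{-1}\d\mbx$, one computes ${}^t\omega\circ\omega={}^t\d\mbx\circ\d\mbx$ because $\mba\in\SO(3)$; moreover, since $\xi$ pulls back to a coframing, $\d\mbx$ has full rank and $\mbx$ is a local diffeomorphism, so $g$ is the pullback of the standard flat metric on~$\mathbb{R}^3$ and therefore flat.

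\textbf{Plan for the converse.} I would directly invoke the equivalence recalled in the Initial Discussion: on a simply-connected~$U$, a coframing~$\omega$ defines a flat metric if and only if $\omega=\mba^{-1}\d\mbx$ for some smooth $\mbx\colon U\to\mathbb{R}^3$ and $\mba\colon U\to\SO(3)$. This produces the required $(\mbx,\mba)$. The section $\iota(p)=\bigl(p,\mbx(p),\mba(p)\bigr)$ then satisfies $\iota^*\xi=\mba^{-1}\d\mbx=\omega$, which is a coframing by hypothesis, and $\iota^*\eta=\eta$, which is a coframing on~$U$ by construction. The final check is the computation $\iota^*\Theta=\d\iota^*\xi-\Omega=\d\omega-\Omega=0$, so $\iota(U)$ is an integral manifold of~$\cI$.

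\textbf{Main obstacle.} Once the two sides of the correspondence $\omega\leftrightarrow(\mbx,\mba)$ are in place, the rest of the proof is essentially bookkeeping with the structure equations~\eqref{structure-eqs-on-R3xSO3} restricted to a section. The only substantive ingredient is the development (i.e., isometric-immersion) theorem for flat metrics on simply-connected domains, which underlies the backward direction; this is a classical fact, already cited by the authors when setting up the problem, and I would use it as a black box rather than reprove it.
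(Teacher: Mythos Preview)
Your proposal is correct and follows the paper's outline: the forward direction is, as the paper says, ``simply unwinding the definitions,'' and you carry this out explicitly and correctly. The only difference is in the converse: rather than citing the development theorem as a black box, the paper rederives it in situ---the fundamental lemma of Riemannian geometry gives a unique $\phi$ with $\d\omega=-[\phi]\wedge\omega$, flatness becomes $\d\phi=-\tfrac12[\phi]\wedge\phi$, and these structure equations on the simply-connected~$U$ integrate to produce $\mbx$ and~$\mba$ with $\omega=\mba^{-1}\d\mbx$ and $[\phi]=\mba^{-1}\d\mba$---so the paper's argument is self-contained via~\eqref{structure-eqs-on-R3xSO3}, while yours outsources that step.
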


\begin{proof} The statements in the first paragraph of the proposition are proved by simply unwinding the definitions and can be left to the reader.

For the converse statements (i.e., the second paragraph), suppose that a coframing $\omega\colon TU\to\bbR^3$ be given satisfying the two conditions. By the fundamental lemma of Riemannian geometry, there exists a unique $\bbR^3$-valued $1$-form~$\phi\colon TU\to\bbR^3$ such that
\begin{gather*}
\d\omega = -[\phi]\wedge\omega.
\end{gather*}
The condition that the metric~$g = {}^t\omega\circ \omega$ be flat is then the condition that~$\d\phi = -\tfrac12 [\phi]\wedge\phi$. These equations for the exterior derivatives of $\omega$ and $\phi$, together with the simple-connectivity of~$U$, imply that there exist maps~$\mbx\colon U\to\bbR^3$ and $\mba\colon U\to\SO(3)$ such that
\begin{gather}\label{flat-omega-and-phi}
\omega = \mba^{-1} \d \mbx
\qquad\text{and}\qquad
[\phi] = \mba^{-1} \d \mba.
\end{gather}
Consequently, $g = {}^t\omega\circ \omega$ is equal to ${}^t\d \mbx\circ \d \mbx$, which is flat, by definition. Finally, since~$\d\omega = \Omega$, it follows that the graph manifold~$N^3\subset X$ defined by~\eqref{eq: xa-graphinX} is an integral manifold of~$\cI$. Moreover, since, by construction,
\begin{gather*}
(\mathrm{id}_U, \mbx ,\mba)^*(\xi) = \omega,
\end{gather*}
it follows that $\xi$ and $\eta$ pull back to~$N^3$ to be coframings on~$N^3$.
\end{proof}

\begin{Remark}Observe that the $1$-forms $\omega$ and $\phi$ in equation \eqref{flat-omega-and-phi} are the pullbacks to $U$ of the $1$-forms $\xi$ and $\alpha$, respectively, on $\bbR^3 \times \SO(3)$ defined by equation \eqref{basis-forms-on-R3xSO3}. We will continue to use this notation to distinguish between forms on $\bbR^3 \times \SO(3)$ and their pullbacks via 3-dimensional immersions throughout the paper.
\end{Remark}

\subsection{Integral elements}
By Proposition~\ref{prop: formulation_as_EDS},
proving existence of local solutions of our problem
is equivalent to proving the existence of integral manifolds of~$\cI$
to which $\xi$ and $\eta$ pull back to be coframings. (This latter
condition is usually referred to as an `independence condition'.)

The first step in this approach
is to understand the nature of the integral elements of~$\cI$,
i.e., the candidates for tangent spaces to the integral manifolds of~$\cI$.

A (necessarily $3$-dimensional) integral element~$E\in\mathrm{Gr}(3,TX)$
of~$\cI$ will be said to be \emph{admissible}
if both $\xi\colon E\to\bbR^3$ and $\eta\colon E\to\bbR^3$ are isomorphisms.

\begin{Proposition}\label{prop: K-ordinaryintegralelements}
All of the admissible integral elements of~$\cI$ are K\"ahler-ordinary.\footnote{For definitions of K\"ahler-ordinary, Cartan-ordinary, etc.,
see~\cite[Chapter~III, Definition~1.7]{BCGGG}.}
The set $\cV_3\bigl(\cI,(\xi,\eta)\bigr)$
consisting of admissible integral elements of~$\cI$
is a submanifold of~$\mathrm{Gr}(3,TX)$,
and the basepoint projection~$\cV_3\bigl(\cI,(\xi,\eta)\bigr)\to X$
is a surjective submersion
with all fibers diffeomorphic to~$\GL(3,\bbR)$.
\end{Proposition}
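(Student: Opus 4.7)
The plan is to give an explicit pointwise parameterization of the admissible integral elements. At any $x = (p,\mbx_0,\mba_0) \in X$, the nine $1$-forms $\bigl(\xi^i,\alpha^i,\eta^i\bigr)_{i=1,2,3}$ furnish a coframing of $T_xX$, so any $3$-plane $E \subset T_xX$ on which $\eta|_E$ restricts to an isomorphism $E \to \bbR^3$ is uniquely presented as a graph
\[
\xi|_E = P\,\eta|_E,\qquad \alpha|_E = Q\,\eta|_E,
\]
for some pair of $3{\times}3$ real matrices $(P,Q)$. The admissibility condition---that $\xi|_E$ also be an isomorphism---becomes precisely the open condition $P \in \GL(3,\bbR)$. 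Thus the admissible $3$-planes at $x$ form an affine bundle over $\GL(3,\bbR) \ni P$ with fiber $M_{3\times 3}(\bbR) \ni Q$, and the problem is to determine which of these are integral elements.

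Next, substitute this parameterization into the defining $2$-form
\[
\Theta = -[\alpha]\wedge\xi - \tfrac12 Z [\eta]\wedge\eta.
\]
Setting $R := QP^{-1}$, the identity $[A\alpha]\wedge\alpha = \tfrac12\bigl((\tr A)I_3 - {}^t\!A\bigr)[\alpha]\wedge\alpha$ from \eqref{eq: crossproductidentities} (applied with $A = R$ and $\alpha = \xi|_E$), combined with $[A\alpha]\wedge A\alpha = \det(A)({}^t\!A)^{-1}[\alpha]\wedge\alpha$ to pass from $[\xi]\wedge\xi|_E$ back to $[\eta]\wedge\eta|_E$, shows by a short calculation that $\Theta|_E = 0$ is equivalent to the single matrix identity
\[
(\tr R)\,I_3 - {}^tR \;=\; -\det(P)^{-1}\,Z(p)\,{}^tP.
\]
The linear map $R \mapsto (\tr R)I_3 - {}^tR$ on $M_{3\times 3}(\bbR)$ is invertible: taking the trace of both sides recovers $\tr R$, and then $R$ is read off from the full equation. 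Consequently, for each $x\in X$ and each $P \in \GL(3,\bbR)$, there is a unique $Q = Q(x,P)$ making the resulting $3$-plane $E(x,P)$ an integral element.

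This explicit parameterization proves most of the statement at once: the map $(x,P) \mapsto E(x,P)$ is a smooth embedding of $X \times \GL(3,\bbR)$ into $\mathrm{Gr}(3,TX)$ whose image is exactly $\cV_3\bigl(\cI,(\xi,\eta)\bigr)$, which is therefore a smooth submanifold of dimension $9 + 9 = 18$. The basepoint projection to $X$ is just the projection to the first factor, hence a surjective submersion, and the fiber over each $x$ is diffeomorphic to $\GL(3,\bbR)$ via $P$. The codimension $27 - 18 = 9$ in $\mathrm{Gr}(3,TX)$ matches the $9$ scalar components of the $\bbR^3$-valued $2$-form $\Theta$, and the argument above exhibits these $9$ equations as having constant maximal rank on the open locus where $P$ is invertible. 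Since admissibility is an open condition in $\mathrm{Gr}(3,TX)$, the admissible integral elements form an open subset of $\cV_3(\cI)$ on which the variety of integral elements is smooth of the expected codimension, which is the definition of K\"ahler-ordinary in \cite[Chapter~III, Definition~1.7]{BCGGG}.

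The main point requiring care is the reduction of $\Theta|_E = 0$ to the displayed matrix identity; once the cross-product identities in \eqref{eq: crossproductidentities} have been juggled into that form and the invertibility of $R \mapsto (\tr R)I_3 - {}^tR$ is verified, the remaining assertions follow by inspection of the parameterization.
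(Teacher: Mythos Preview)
Your argument is correct and follows essentially the same route as the paper: parameterize admissible $3$-planes by $(P,Q)$ with $P\in\GL(3,\bbR)$, use the cross-product identities to reduce $\Theta|_E=0$ to a linear equation in $R=QP^{-1}$ (the paper's $Q$), invert $R\mapsto(\tr R)I_3-{}^tR$, and read off the fiber $\GL(3,\bbR)$. The only cosmetic difference is that the paper writes $\alpha|_E = QP\,\eta|_E$ from the start (so their $Q$ is your $R$) and obtains the explicit formula \eqref{eq: QintermsofPandZ}, while you supply a slightly more detailed justification of the K\"ahler-ordinary conclusion via the dimension count.
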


\begin{proof}
Let $(p,\eurox,\euroa)\in X=M\times\bbR^3\times\SO(3)$, and
let $E\subset T_{(p,\eurox,\euroa)} X$ be a $3$-dimensional integral element of $\cI$
to which both $\xi$ and $\eta$ pull back to give an isomorphism of~$E$
with $\mathbb{R}^3$. Then there will exist a $P\in\GL(3,\bbR)$
and a $3$-by-$3$ matrix~$Q$ with real entries
such that $E\subset T_{(p,\eurox,\euroa)} X$ is defined as the kernel of the surjective linear mapping
\begin{gather}\label{eq: Eperp_eqns}
(\xi{-}P \eta, \alpha{-}QP \eta)\colon \ T_{(p,\eurox,\euroa)}\to\bbR^3\oplus\bbR^3.
\end{gather}
To simplify the notation, set $\bar\eta = E^*\eta$.
Then, $E^*\xi = P\bar\eta$ and $E^*\alpha = QP\bar\eta$.
The $2$-form~$\Theta$, which vanishes
when pulled back to~$E$, becomes
\begin{gather*}
0 = E^*\Theta= -[QP \bar\eta]\wedge P \bar\eta-\tfrac12 Z(p) [\bar\eta]\wedge\bar\eta\\
\hphantom{0 = E^*\Theta}{} = -\tfrac12\bigl(\bigl((\tr Q)I_3-{}^t\!Q\bigr)\det(P)\big({}^t\!P\big)^{-1}
 + Z(p)\bigr) [\bar\eta]\wedge\bar\eta.
\end{gather*}
Since~$\bar\eta:E\to\bbR^3$ is an isomorphism, it follows that
\begin{gather*}
\bigl((\tr Q)I_3 - {}^t\!Q\bigr) + Z(p) \,{}^t\!P/\det(P) = 0,
\end{gather*}
so that, solving for~$Q$, one has
\begin{gather}\label{eq: QintermsofPandZ}
Q = \det(P)^{-1}\big(P \, {}^t\!Z(p) - \tfrac12 \tr\bigl(P\, {}^t\!Z(p)\bigr) I_3\big).
\end{gather}

Conversely, if $(p,\eurox,\euroa)\in X=M\times\bbR^3\times\SO(3)$
and~$P\in\GL(3,\bbR)$ are arbitrary
and one defines $Q$ via~\eqref{eq: QintermsofPandZ},
then the kernel~$E\subset T_{(p,\eurox,\euroa)}X$
of the mapping~\eqref{eq: Eperp_eqns}
is an admissible integral element of~$\cI$.

The claims of the Proposition follow directly from these observations.
\end{proof}

\subsection{Polar spaces and Cartan-regularity}
In order to be able to apply the Cartan--K\"ahler theorem to prove existence of solutions in the real-analytic category, one needs a stronger result than Proposition~\ref{prop: K-ordinaryintegralelements}; one needs to show that there are \emph{Cartan}-ordinary admissible integral elements, in other words, to establish the existence of ordinary \emph{flags} terminating in elements of~$\cV_3\bigl(\cI,(\xi,\eta)\bigr)$. This requires some further investigations of the structure of the ideal~$\cI$ near a given integral element in~$\cV_3\bigl(\cI,(\xi,\eta)\bigr)$.

Let $E\in\cV_3\bigl(\cI,(\xi,\eta)\bigr)$ be fixed, with $E\subset T_{(p,\eurox,\euroa)}X$, and let $E$ be defined in this tangent space by the $6$ linear equations
\begin{gather}\label{eq: E in PQterms}
\xi - P \eta = \alpha - QP \eta = 0,
\end{gather}
where $Q$ is given in terms of $P\in\mathrm{GL}(3,\mathbb{R})$ and $Z(p)$ by~\eqref{eq: QintermsofPandZ}. For simplicity, set $\xi_E = (\xi - P \eta)_{\vrule(p,\eurox,\euroa)}$
and $\alpha_E = (\alpha - QP \eta)_{\vrule(p,\eurox,\euroa)}$, and let $\omega_E = (P \eta)_{\vrule(p,\eurox,\euroa)}$. The $9$ components of $\xi_E$, $\alpha_E$, and $\omega_E$ yield a basis of $T^*_{(p,\eurox,\euroa)}X$, with $E^\perp\subset T^*_{(p,\eurox,\euroa)}X$ being spanned by the components of~$\xi_E$ and $\alpha_E$ while $\omega_E\colon E\to\bbR^3$ is an isomorphism.

After calculation using~\eqref{eq: QintermsofPandZ} and the identities~\eqref{eq: crossproductidentities}, one then finds that $\Theta_{\vrule(p,\eurox,\euroa)}$ has the following expression in terms of $\xi_E$, $\alpha_E$, and $\omega_E$:
\begin{gather*}
\Theta_{\vrule(p,\eurox,\euroa)} = - [\alpha_E ]\wedge \omega_E
- [Q \omega_E ]\wedge \xi_E - [\alpha_E ]\wedge \xi_E\\
\hphantom{\Theta_{\vrule(p,\eurox,\euroa)}}{} =-\bigl( [\alpha_E ]+ [\xi_E ]Q\big)\wedge\omega_E
 - \left[\alpha_E\right]\wedge \xi_E .
\end{gather*}
The second term in this final expression, $- [\alpha_E ]\wedge \xi_E$, lies in $\Lambda^2\big(E^\perp\big)$ and hence plays no role in the calculation of the polar equations of~$E$. Hence, the polar spaces for an integral flag of~$E$ can be calculated using only $-\bigl( [\alpha_E ]+ [\xi_E ]Q\big)\wedge\omega_E$.

If $(\mbe_1,\mbe_2,\mbe_3)$ is a basis of~$E$, let $E_i\subset E$ be the subspace spanned by $\{ \mbe_j\, \vrule \, j\le i \}$ and set $w_i = \omega_E(\mbe_i)\in\bbR^3$.
Then the polar space of~$E_i$ is given by
\begin{gather*}
H(E_i) = \big\{\mbv\in T_{(p,\eurox,\euroa)}X\, \vrule\, \big( [\alpha_E(\mbv) ]+ [\xi_E(\mbv) ]Q\big)w_j =0, \ j\le i\big\}.
\end{gather*}
Consequently, the codimension~$c_i$ of this polar space
satisfies $c_i \le 3i$ for $0\le i\le 3$. Since the codimension
of~$\cV_3\bigl(\cI,(\xi,\eta)\bigr)$ in~$\mathrm{Gr}(3,TX)$ is~$9$,
which is always greater than or equal to $c_0 + c_1 + c_2$,
it follows, by Cartan's test, that the flag
$(E_0\subset E_1\subset E_2\subset E_3)$
will be Cartan-ordinary if and only if $c_0+c_1+c_2=9$,
i.e., $c_i = 3i$ for $i = 0,1,2$.
Moreover, this holds if and only if $c_2 = 6$.

Whether or not there is a $2$-plane $E_2\subset E$ with~$c_2 = 6$
evidently depends on~$Q$ (which is determined by~$E$).

\begin{Example}\label{ex: noninvolutivity}
Suppose that $E$ satisfies~$Q = 0$, which, by~\eqref{eq: QintermsofPandZ},
is the case for all of the admissible integral elements based at $(p,\eurox,\euroa)$
if $Z(p) = 0$.
In this case, it is clear that $\left[\alpha_E\right]+\left[\xi_E\right]Q
= \left[\alpha_E\right]$ takes values in skew-symmetric $3$-by-$3$
matrices and hence that, for every $2$-plane $E_2\subset E$,
one must have $H(E_2) = \ker\alpha_E$, so that $c_2=3$.
Thus, Cartan's inequality is strict,
and the integral element~$E$ is not Cartan-ordinary.

Note, though, that this does not imply that there are no solutions
to the original problem on domains containing~$p$ when $Z(p)=0$;
it's just that Cartan--K\"ahler cannot immediately be applied
in such situations. For example, note that,
when~$\Omega$ vanishes identically (equivalently, $Z$ vanishes identically),
then all of the admissible integral elements of~$\cI$
are contained in the integrable $6$-plane field $\alpha = 0$,
and, indeed, the general solution~$\omega$ is of the form~$\omega=\d \mbx$
where $\mbx\colon M\to\bbR^3$ is any immersion.
\end{Example}

For any $3$-by-$3$ matrix $Q$, define $A_Q\subset \eugl(3,\bbR)
= \Hom \big(\bbR^3,\bbR^3\big)$, the \emph{tableau} of~$Q$,
to be the span of the $3$-by-$3$ matrices
\begin{gather*}
[\mbx ]+ [\mby] Q
\end{gather*}
for $\mbx,\mby\in\mathbb{R}^3$.
The dimension of the vector space $A_Q$ lies between $3$ and $6$.

It is evident that the polar equations of flags
in a given admissible integral element $E$
defined by~\eqref{eq: E in PQterms} are governed
by the properties of the tableau~$A_Q$.

To simplify the study of~$A_Q$, it is useful to note that
it has a built-in equivariance:
For $R\in\SO(3)$, one has
\begin{gather*}
R\bigl([\mbx ]+ [\mby] Q\bigr)R^{-1} = R[\mbx]R^{-1}+R[\mby]R^{-1}\big(RQR^{-1}\big) = [R\mbx] + [R\mby] RQR^{-1}.
\end{gather*}
Hence,
\begin{gather*}
R A_Q R^{-1} = A_{RQR^{-1}}.
\end{gather*}
In particular, properties of~$A_Q$ such as its dimension, character sequence, and involutivity depend only on the equivalence class of the matrix~$Q$ under the action of conjugation by $\SO(3)$. Also, writing $Q = q I_3 + Q_0$ where $\tr(Q_0) = 0$, one has
\begin{gather*}
[ \mbx ]+ [\mby ] Q = [ \mbx + q \mby ]+ [\mby ] Q_0 .
\end{gather*}
Thus,
\begin{gather*}
A_Q = A_{Q_0}.
\end{gather*}

\begin{Proposition}\label{prop: nondegenerateA_Q}
The tableau~$A_Q\subset \eugl(3,\bbR) = \Hom \big(\bbR^3,\bbR^3\big)$
has dimension~$6$ and is involutive
with characters~$(s_1,s_2,s_3) = (3,3,0)$,
except when the trace-free part of~$Q$ is conjugate by $\SO(3)$
to a matrix of the form
\begin{gather}\label{eq: Q0normalizeddegenerate}
Q_0 \simeq \begin{pmatrix}-2x&0&0\\0&x+3r&3y\\0&-3y&x-3r\end{pmatrix},
\end{gather}
where $(x,y,r)$ are real numbers satisfying either $r^2 = x^2 + y^2$
or $r = y = 0$.
\end{Proposition}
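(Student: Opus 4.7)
Since $A_Q = A_{Q_0}$, I may assume $Q = Q_0$ is trace-free. By the $\SO(3)$-equivariance $R\,A_Q R^{-1} = A_{RQR^{-1}}$, all the properties to be checked depend only on the conjugacy class of $Q_0$. Decompose $Q_0 = S + K$ with $S$ symmetric and $K = [\mbv]$ skew. First diagonalize $S$ by an element of $\SO(3)$; the residual finite stabilizer of a diagonal $S$ then permits a further rotation that aligns $\mbv$ with a chosen coordinate axis. Parametrizing the eigenvalues of $S$ as $(-2x,\, x+3r,\, x-3r)$ and writing $\mbv = (3y,0,0)$ gives the normal form~\eqref{eq: Q0normalizeddegenerate}, in which the rest of the argument takes place.

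The main computation determines when $\dim A_Q < 6$. Setting $\Phi(\mbx,\mby) = [\mbx]+[\mby]Q_0$, an element $(\mbx,\mby)\in\ker\Phi$ must have $[\mby]Q_0$ skew, equivalently ${}^t\!Q_0\,[\mby] = [\mby]Q_0$, and then $\mbx$ is determined by $\mby$. Substituting $Q_0 = S+K$, this rearranges to the equation of \emph{symmetric} $3\times 3$ matrices
\begin{gather*}
S[\mby]-[\mby]S = K[\mby]+[\mby]K,
\end{gather*}
whose right-hand side expands via the identity $[\mby][\mbv]+[\mbv][\mby] = \mbv\,{}^t\mby + \mby\,{}^t\mbv - 2(\mbv\cdot\mby)I_3$. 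Writing the components of $\mby$ as $(b_1,b_2,b_3)$, the six entries of this symmetric-matrix equation reduce in the normal form to $y\,b_1 = r\,b_1 = 0$ (from the $(2,2)$, $(3,3)$, and $(2,3)$ entries) together with the $2\times 2$ system $(x+r)b_3 + y\,b_2 = 0$, $(x-r)b_2 - y\,b_3 = 0$ on $(b_2,b_3)$. The $2\times 2$ block has determinant $r^2-x^2-y^2$, so nontrivial solutions exist precisely when either this determinant vanishes (yielding $r^2 = x^2+y^2$) or $y = r = 0$ (freeing $b_1$). These are exactly the two degenerate loci of the statement.

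Assuming $\dim A_Q = 6$, the character sequence is elementary. For generic $v_1 \in \bbR^3$, the vectors $v_1$ and $Q_0 v_1$ are linearly independent, so the evaluation $a \mapsto a(v_1)$ from $A_Q$ to $\bbR^3$ is surjective with $3$-dimensional kernel, giving $s_1 = 3$; for generic $(v_1,v_2)$ the joint evaluation $a \mapsto \bigl(a(v_1), a(v_2)\bigr)$ has rank $6$, forcing $s_2 = 3$ and $s_3 = 0$. For involutivity, one verifies Cartan's equality $\dim A_Q^{(1)} = s_1 + 2 s_2 + 3 s_3 = 9$ by parametrizing $\sigma \in A_Q^{(1)}$ via pairs of linear maps $(\mathbf{X},\mathbf{Y})\colon\bbR^3\to\bbR^3$ through $\sigma(v) = [\mathbf{X}v] + [\mathbf{Y}v]Q_0$, imposing the symmetry $\sigma(v_1)v_2 = \sigma(v_2)v_1$, and checking that the resulting linear system on the $18$ entries of $(\mathbf{X},\mathbf{Y})$ cuts out a $9$-dimensional subspace. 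The main obstacle is the factorization in step two: one must show that the singular locus of the $\mby$-system is captured exactly by the two stated conditions, with no spurious components. Once that is done, the prolongation count, though lengthy, is routine linear algebra in the normal form.
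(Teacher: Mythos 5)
There is a genuine gap, and it sits exactly at your normal-form step. Conjugation by $R\in\SO(3)$ acts on the skew vector by $\mbv\mapsto R\mbv$, and once the symmetric part $S$ is diagonalized with distinct eigenvalues, its stabilizer in $\SO(3)$ is a \emph{finite} group (signed permutation matrices commuting with $S$); a finite group cannot rotate a generic $\mbv$ onto a coordinate axis. A parameter count confirms the loss: trace-free matrices modulo $\SO(3)$-conjugation form, generically, a $5$-dimensional moduli, while your family $(x,r,y)$ has only $3$ parameters---it is precisely the paper's general form~\eqref{eq: Q0normalized} with $p_2=p_3=0$ imposed from the outset. Consequently your kernel computation proves only one direction: that the matrices in~\eqref{eq: Q0normalizeddegenerate} with $r^2=x^2+y^2$ or $r=y=0$ \emph{are} degenerate (that part is correct, and your $2\times2$ determinant $r^2-x^2-y^2$ matches the paper's factor $p_1^2+2q_2^2+5q_2q_3+2q_3^2 = 9\big(x^2+y^2-r^2\big)$). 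It does not prove the converse, namely that $\dim A_Q=6$ whenever the skew vector is in general position relative to the eigenframe of $S$---which is the generic case and the main content. In the paper, the alignment of the skew part with an eigenvector (``two of the $p_i$ must vanish'') is the \emph{output} of the elimination, not an allowed normalization. Your reformulation $S[\mby]-[\mby]S = K[\mby]+[\mby]K$ is correct and is actually a good vehicle for the repair: run it with general $K=[(p_1,p_2,p_3)]$; the diagonal entries force $p_i b_i=0$ for each $i$, the off-diagonal entries give $(s_i-s_j)b_k = p_i b_j + p_j b_i$, and you must then show that a nonzero solution $\mby$ forces two of the $p_i$ to vanish. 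That elimination is the step the paper delegates to MAPLE, and your proposal omits it.

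The involutivity half has the weight in the wrong place. Your claim that $s_2=3$ for generic $(v_1,v_2)$ is not elementary: $s_2<3$ would mean that for \emph{every} $2$-plane there is a nonzero rank-one element $\mbx\,\mbz^*\in A_Q$ annihilating it, i.e., that every covector is characteristic. Ruling this out is exactly the paper's second substantial computation---showing the cubic $c_Q(\mbz^*)=\det C_Q(\mbz^*)$ does not vanish identically off the degenerate loci. (That the locus where $c_Q\equiv 0$ coincides with the dimension-drop locus is a coincidence verified by computation, not something you may assume; a priori involutivity could fail on a different set.) Conversely, the step you defer as ``routine,'' the prolongation count, is in fact automatic once $s_2=3$ is known: $A_Q^{(1)}$ is the kernel of the skew-symmetrization map $A_Q\otimes\big(\bbR^3\big)^*\to\bbR^3\otimes\Lambda^2\big(\bbR^3\big)^*$, so $\dim A_Q^{(1)}\ge 18-9=9$ always, while Cartan's inequality gives $\dim A_Q^{(1)}\le s_1+2s_2+3s_3=9$; hence a codimension-$3$ tableau in $\eugl(3,\bbR)$ is involutive with characters $(3,3,0)$ as soon as it admits a single non-characteristic covector, which is the criterion the paper uses. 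So your ``elementary'' step is the real computation, your ``main obstacle'' and ``routine'' steps are respectively only half the problem and unnecessary, and both the dimension count and the characteristic-variety analysis must be carried out in a genuinely general ($5$-parameter) normal form before the proposition is established.
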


\begin{proof}
The proof is basically a computation. The conjugation action of $\SO(3)$
on $3$-by-$3$ mat\-rices preserves the splitting of $\eugl(3,\bbR)$
into three pieces: The multiples of the identity (of dimension~$1$),
the subalgebra~$\euso(3)$ (of dimension $3$),
and the traceless symmetric matrices (of dimension $5$).
Moreover, as is well-known, a symmetric $3$-by-$3$ matrix can be
diagonalized by conjugating with an orthogonal matrix. Thus, one
is reduced to studying the case in which $Q_0$ is written in the form
\begin{gather}\label{eq: Q0normalized}
Q_0 = \begin{pmatrix}\phm q_1&\phm p_3&-p_2\\
 -p_3&\phm q_2&\phm p_1\\
 \phm p_2&-p_1&\phm q_3\end{pmatrix},
\end{gather}
where $q_1+q_2+q_3 = 0$.

It is now a straightforward (if somewhat tedious) matter
(which can be eased by MAPLE) to check that, when $A_{Q_0}$
has dimension less than $6$ (the maximum possible),
two of the $p_i$ must vanish. Thus, after conjugating by
a signed permutation matrix that lies in $\SO(3)$,
one can assume that $p_2=p_3=0$.
With this simplification, $A_{Q_0}$ is seen to have dimension
less than $6$ if and only if
\begin{gather*}
p_1 \big({p_1}^2 + 2{q_2}^2 + 5q_2q_3 + 2{q_3}^2\big) =
(q_2 - q_3) \big({p_1}^2 + 2{q_2}^2 + 5q_2q_3 + 2{q_3}^2\big) = 0.
\end{gather*}
Thus, either ${p_1}^2 + 2{q_2}^2 + 5q_2q_3 + 2{q_3}^2=0$ or $p_1=q_2-q_3=0$.
Making the necessary changes of basis,
these two cases give the two non-involutive normal forms
in~\eqref{eq: Q0normalizeddegenerate}.

It remains to show that, when $A_Q$ has dimension~$6$,
it actually is involutive with the stated characters~$(s_1,s_2,s_3)=(3,3,0)$.
To do this, return to the general normal form~\eqref{eq: Q0normalized},
and assume that $A_Q$ has dimension~$6$.
Because $A_Q$ has codimension~$3$ in~$\eugl(3,\bbR)$,
it will be involutive with characters~$(s_1,s_2,s_3)=(3,3,0)$
if and only if it has a non-characteristic covector.
Now, the condition that a covector~$\mbz^* = (z_1,z_2,z_3)\in \big(\bbR^3\big)^*$
be characteristic for~$A_Q$ is the condition
that the $3$-dimensional vector space of rank~$1$ matrices
of the form~$\mbx \mbz^*$ (where $\mbx\in\bbR^3$ and $\mbz^* = (z_1,z_2,z_3)$
is regarded as a row vector) have a nontrivial intersection
with~$A_Q$ in~$\eugl(3,\mathbb{R})$.
The condition that a rank $1$ matrix~$\mathbf{r} = \mathbf{x}\mathbf{z}^*$
lie in the $6$-dimensional subspace $A_Q$ of the $9$-dimensional space
$\frak{gl}(3,\mathbb{R})$ can be expressed as 3 homogeneous linear equations in~$\mathbf{r}$,
i.e., $3$ homogeneous equations bilinear in the components
of~$\mathbf{x}$ and $\mathbf{z}^*$. Regarding $\mathbf{z}^*\not=0$ as given,
this becomes a system of three linear equations for the components
of~$\mathbf{x}$ whose coefficient matrix~$C_Q(\mathbf{z}^*)$ is $3$-by-$3$
with entries that are linear in the components of~$\mathbf{z}^*$.
This system will have a nonzero solution~$\mathbf{x}$
if and only if $\det\bigl(C_Q(\mathbf{z}^*)\bigr) = 0$.
In terms of the coefficients~$p_i$ and~$q_i$ of~$Q_0$,
this determinant vanishing can be written
as a homogeneous cubic polynomial equation
\begin{gather*}
0 = \sum_{ijk} c_{ijk}(p,q) z_i z_j z_k = c_Q(\mbz^*).
\end{gather*}
One then finds (again by a somewhat tedious calculation
that is eased by MAPLE) that this equation holds identically in~$\mbz^*$
(i.e., that all of the~$c_{ijk}(p,q)$ vanish) if and only if~$Q_0$
is equivalent to a matrix of the form~\eqref{eq: Q0normalizeddegenerate}
subject to either of the two conditions~$r=y=0$ or $r^2 = x^2+y^2$.

Thus, except when $Q_0$ is orthogonally equivalent to such matrices,
$A_Q$ has dimension~$6$ and there exists a non-characteristic covector~$\mbz^*$ for~$A_Q$. As already explained, this implies that~$A_Q$ is
involutive, with the claimed Cartan characters.
\end{proof}

\begin{Remark}The $\SO(3)$-orbits of the matrices~$Q$
whose trace-free part~$Q_0$
is of the form~\eqref{eq: Q0normalizeddegenerate}
with~$r=y=0$ forms a closed cone of dimension~$4$
in the ($9$-dimensional) space~$\eugl(3,\mathbb{R})$
of $3$-by-$3$ matrices.
Meanwhile, the $\SO(3)$-orbits of the matrices~$Q$
whose trace-free part~$Q_0$
is of the form~\eqref{eq: Q0normalizeddegenerate}
with~$r^2=x^2+y^2$ forms a closed cone of dimension~$6$
in~$\eugl(3,\mathbb{R})$.

Consequently, the set consisting of those~$Q$
for which $A_Q$ is involutive is an open dense set
in the space~$\eugl(3,\mathbb{R})$.
\end{Remark}

\begin{Remark}\label{rem: Q-hyperbolicity}
It does not appear to be easy to determine the condition on~$Q$
that the real cubic curve~$c_Q(\mbz^*) = 0$ be a smooth,
irreducible cubic with two circuits. This is what one would need
in order to have a chance of showing that the (linearized) equation
were symmetric hyperbolic, which would be a key step in proving
solvability of the original problem in the smooth category.
\end{Remark}

\begin{Corollary}\label{cor: ECartan-ordinary}
If $E\in \cV_3\bigl(\cI,(\xi,\eta)\bigr)$
is defined by equations~\eqref{eq: E in PQterms},
then $E$ is Cartan-regular
if and only if $Q_0 = Q - \tfrac13\tr(Q)I_3$
is not orthogonally equivalent
to a matrix of the form~\eqref{eq: Q0normalizeddegenerate},
where either $r = y =0$ or $r^2 = x^2+y^2$.
\end{Corollary}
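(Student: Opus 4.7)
My approach is to read off the polar-space codimensions of a flag in~$E$ directly from the tableau~$A_Q$, convert Cartan-regularity into a purely tableau-theoretic condition, and then invoke Proposition~\ref{prop: nondegenerateA_Q}.

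The starting point is the observation made just before Example~\ref{ex: noninvolutivity}: the polar equations at a flag in~$E$ are generated entirely by the $2$-form $-\bigl([\alpha_E]+[\xi_E]Q\bigr)\wedge\omega_E$, since the remaining summand $-[\alpha_E]\wedge\xi_E$ lies in $\Lambda^2(E^\perp)$. From this I would argue that, for any basis $(\mbe_1,\mbe_2,\mbe_3)$ of~$E$ with $w_j=\omega_E(\mbe_j)$, the polar codimension of the partial flag $E_0\subset E_1\subset\cdots\subset E_i$ is
\[
c_i \;=\; \dim A_Q \;-\; \dim\{B\in A_Q : B w_j=0,\ j\le i\},
\]
because $[\mba]+[\mbb]Q$ sweeps out all of~$A_Q$ as the pair $(\mba,\mbb)=(\alpha_E(\mbv),\xi_E(\mbv))$ varies over $\bbR^3\oplus\bbR^3$. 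By Proposition~\ref{prop: K-ordinaryintegralelements}, $\cV_3\bigl(\cI,(\xi,\eta)\bigr)$ has codimension $27-18=9$ in $\mathrm{Gr}(3,TX)$, so, in view of the universal bound $c_i\le 3i$, Cartan's inequality $c_0+c_1+c_2\le 9$ holds with equality precisely when $c_1=3$ and $c_2=6$.

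Next, I would recognize this equality as a tableau-theoretic statement about $A_Q$. For a generic choice of $(w_1,\ldots,w_i)$, the quantity $c_i$ equals the partial sum $s_1+\cdots+s_i$ of the Cartan characters of~$A_Q$, so achieving $c_1=3$ and $c_2=6$ for \emph{some} (equivalently, generic) basis of~$E$ amounts to $A_Q$ having characters $(s_1,s_2,s_3)=(3,3,0)$, i.e., $\dim A_Q=6$ together with the existence of a non-characteristic covector. This is precisely the involutivity condition analyzed in Proposition~\ref{prop: nondegenerateA_Q}, whose failure is identified there as $Q_0$ being orthogonally equivalent to a matrix of the form~\eqref{eq: Q0normalizeddegenerate} with $r=y=0$ or $r^2=x^2+y^2$.

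The one place where a little care is needed is the converse: in each of the two degenerate regimes one must rule out the possibility of hitting $c_2=6$ by a clever choice of flag. If $\dim A_Q<6$ (the $r=y=0$ case), then $c_2\le\dim A_Q<6$ uniformly in~$(w_1,w_2)$. If $\dim A_Q=6$ but the characteristic cubic $c_Q(\mbz^*)$ vanishes identically (the $r^2=x^2+y^2$ case), then any $2$-plane $\mathrm{span}(w_1,w_2)\subset\bbR^3$ is annihilated by a nonzero covector~$\mbz^*$; being characteristic, $\mbz^*$ yields a nonzero rank-one matrix $\mbx\mbz^*\in A_Q$ vanishing on both~$w_1$ and~$w_2$, so $c_2\le 5$ for every choice of flag. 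The main obstacle is really packaged inside Proposition~\ref{prop: nondegenerateA_Q}; the corollary itself amounts to a clean dictionary between polar-space codimensions of integral flags of~$E$ and Cartan characters of~$A_Q$.
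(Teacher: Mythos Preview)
Your approach is essentially the paper's: translate the polar-space codimensions of a flag in~$E$ into the Cartan characters of the tableau~$A_Q$ and then invoke Proposition~\ref{prop: nondegenerateA_Q}. Your write-up of the dictionary, and especially of the converse direction, is in fact more explicit than the paper's. Two small points, however, deserve attention.

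First, and more importantly, you establish that $E$ is Cartan-\emph{ordinary} when $A_Q$ is involutive, but the corollary asserts Cartan-\emph{regularity}, and this is precisely the step the paper singles out as the only thing not immediately obvious from Proposition~\ref{prop: nondegenerateA_Q}. The missing observation is brief: once $c_2=6$, the polar space $H(E_2)$ has codimension~$6$ in the $9$-dimensional tangent space, hence dimension~$3$; since $E\subset H(E_2)$, this forces $H(E_2)=E$, and therefore $H(E)\subset H(E_2)=E$, i.e., $H(E)=E$. That is exactly the regularity upgrade the paper supplies, and your proposal should include it.

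Second, your labeling of the two degenerate regimes is slightly off. A close reading of the proof of Proposition~\ref{prop: nondegenerateA_Q} shows that \emph{both} conditions $r=y=0$ and $r^2=x^2+y^2$ arise from the analysis of when $\dim A_Q<6$; the situation ``$\dim A_Q=6$ with $c_Q\equiv 0$'' is in fact empty. Your rank-one argument for the second case is valid for its stated hypothesis, but that hypothesis never occurs; the simpler bound $c_2\le\dim A_Q<6$ already covers both degenerate normal forms. This does not break your converse, but you should correct the attribution.
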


\begin{proof}
Everything is clear from Proposition~\ref{prop: nondegenerateA_Q},
except possibly the assertion of Cartan-\emph{regularity}.
However, because the characters are~$(s_1,s_2,s_3)=(3,3,0)$,
when $Q$ avoids the two `degenerate' cones, it follows that,
when $E\in\cV_3\bigl(\cI,(\xi,\eta)\bigr)$ has the property
that its~$A_Q$ is involutive, then, for any non-characteristic
$2$-plane~$E_2\subset E$, we must have $H(E_2) = E$, and hence
$H(E)=E$, so that $E$ must be not only Cartan-ordinary, but
also Cartan-regular.
\end{proof}

\section{Involutivity}\label{sec: Involutivity}
Finally, we collect all of this information together,
yielding our main result:

\begin{Theorem}\label{nondegenerate-involutive-theorem}
Let $\Omega$ be a real-analytic closed $2$-form on a $3$-manifold~$M$ with values in~$\bbR^3$, and suppose that there is no nonzero vector~$\mbv\in T_pM$
such that $\mbv\lhk\Omega = 0$. Then there is an open $p$-neighborhood~$U\subset M$ on which there exists an $\bbR^3$-valued coframing~$\omega\colon TU\to\bbR^3$
such that $\d\omega = \Omega_U$ and such that the metric~$g = {}^t\omega\circ \omega$ is flat. Moreover, the space of such coframings~$\omega$ depends locally on $3$ functions of $2$ variables.
\end{Theorem}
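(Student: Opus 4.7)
The plan is to apply the Cartan--K\"ahler theorem to the exterior differential system $\cI$ on $X = M \times \bbR^3 \times \SO(3)$ constructed in Section~\ref{EDS-sec}. By Proposition~\ref{prop: formulation_as_EDS}, any real-analytic integral manifold of $\cI$ through a point of $X$ on which $\xi$ and $\eta$ pull back as coframings is the graph of a pair $(\mbx,\mba)$ on some $p$-neighborhood~$U \subset M$, and the coframing $\omega = \mba^{-1}\d\mbx$ then solves the original problem.

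The first step I would carry out is to translate the nondegeneracy hypothesis into a statement about the coefficient matrix $Z(p)$ defined by $\Omega = \tfrac12 Z [\eta]\wedge\eta$. Using the identity $\mbv \lhk ([\eta]\wedge\eta) = 2[v]\eta$ (with $v = \eta(\mbv)$, which follows from $[\eta]v = -[v]\eta$), one computes $\mbv \lhk \Omega = Z[v]\eta$, and this vanishes iff $Z[v] = 0$. Since $[v]$ has image $v^\perp$ of dimension~$2$ for $v \neq 0$, the hypothesis ``$\mbv \lhk \Omega = 0$ only for $\mbv = 0$'' is equivalent to $\operatorname{rank} Z(p) \geq 2$, i.e., $\dim\ker Z(p) \leq 1$.

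Next, I would exhibit a Cartan-regular admissible integral element at some point of $\eurox_0^{-1}(p)$. By Proposition~\ref{prop: K-ordinaryintegralelements}, these are parametrized by $P \in \GL(3,\bbR)$ via~\eqref{eq: QintermsofPandZ}, and by Corollary~\ref{cor: ECartan-ordinary} the resulting $E$ is Cartan-regular precisely when $Q_0$ avoids the two degenerate $\SO(3)$-cones of Proposition~\ref{prop: nondegenerateA_Q}. A direct diagonalization of~\eqref{eq: Q0normalizeddegenerate} shows that both cones consist of matrices with a repeated eigenvalue (spectra $\{-2x,x,x\}$ in the $r = y = 0$ case and $\{-2x,-2x,4x\}$ in the $r^2 = x^2+y^2$ case). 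Hence it suffices to find $P$ for which $A := \det(P)^{-1} P\,{}^t\!Z(p)$ has three pairwise distinct eigenvalues, since $Q_0 = A - \tfrac13\tr(A) I_3$ then does as well. If $Z(p)$ is invertible, the map $P \mapsto A$ is surjective onto the sign-$\det Z$ component of $\GL(3,\bbR)$, and a generic $P$ lies outside the discriminant subvariety. If $\operatorname{rank} Z(p) = 2$, then $A$ is forced to have $\ker A = \ker {}^t\!Z(p)$, so $0$ is always a simple eigenvalue for generic $P$; picking $P$ so that $A$ restricted to a complement of $\ker A$ has two distinct nonzero eigenvalues is an open condition on $P$ that is easily checked to be nonempty.

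Given such a Cartan-regular $E$, the Cartan--K\"ahler theorem, applicable thanks to the real-analyticity of $\Omega$ and $\cI$, produces a real-analytic integral manifold $N^3 \subset X$ of $\cI$ through $(p,\eurox_0,\euroa_0)$ with tangent space $E$. Admissibility is an open condition, so $\xi$ and $\eta$ remain coframings on $N$ near that point; Proposition~\ref{prop: formulation_as_EDS} then converts $N$ into the desired flat coframing $\omega$ on a neighborhood $U \subset M$ of $p$. The generality of the general solution follows from the standard Cartan--K\"ahler count: with Cartan characters $(s_1,s_2,s_3) = (3,3,0)$ (Proposition~\ref{prop: nondegenerateA_Q}), the general integral manifold depends on $s_2 = 3$ functions of $2$ variables. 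The main technical hurdle is the rank-$2$ case in the construction of $P$, where the algebraic constraint $\det A = 0$ restricts the image of $P \mapsto A$ but still leaves sufficient freedom to reach the Cartan-regular locus.
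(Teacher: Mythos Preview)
Your argument is correct and follows the same overall architecture as the paper's proof: translate the hypothesis to $\operatorname{rank} Z(p)\ge 2$, exhibit a Cartan-regular admissible integral element over~$p$ via Corollary~\ref{cor: ECartan-ordinary}, and then invoke Cartan--K\"ahler and Proposition~\ref{prop: formulation_as_EDS}.  The one place where you genuinely diverge is in how you produce a~$P$ with~$Q_0$ outside the two degenerate cones.  The paper argues by openness/dimension in the rank-$3$ case and, in the rank-$2$ case, by an explicit computation showing that the characteristic cubic~$c_Q(\mbz^*)$ does not vanish identically on the relevant family of~$Q$'s.  You instead observe that every matrix in either degenerate cone has a repeated eigenvalue (spectra $\{-2x,x,x\}$ and $\{-2x,-2x,4x\}$ respectively), so it suffices to choose~$P$ making $A=\det(P)^{-1}P\,{}^t\!Z(p)$---whose trace-free part is exactly~$Q_0$---have three distinct eigenvalues; you then check this is achievable in both ranks.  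Your eigenvalue criterion is cleaner and avoids computing~$c_Q$, at the cost of being a strictly weaker sufficient condition (there are~$Q_0$ with repeated eigenvalues that are nonetheless outside the degenerate cones and hence involutive).  Either route reaches the same conclusion, and your rank-$2$ argument---that $A$ has a one-dimensional kernel and the remaining two eigenvalues can be made distinct and nonzero by an open condition on~$P$---is a valid replacement for the paper's MAPLE-assisted check.
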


\begin{proof}Keeping the established notation, it suffices to show that, if~$Z(p)$ has rank at least~$2$, then there exists a $P\in\GL(3,\bbR)$ such that, when $Q$ is defined by~\eqref{eq: QintermsofPandZ}, the tableau~$A_Q$ is involutive.

Now, by the hypothesis that there is no nonzero vector $\mbv\in T_pM$
such that $\mbv\lhk\Omega=0$, the rank of $Z(p)$ is either $2$ or $3$.
When the rank of $Z(p)$ is~$3$,
as $P$ varies over~$\GL(3,\bbR)$,
the matrix~$Q$ varies over an open subset of~$\GL(3,\bbR)$,
and it is clear that, for the generic choice of~$P$,
the corresponding $Q_0$ will not be $\SO(3)$-equivalent to anything
in the two `degenerate' cones defined by~\eqref{eq: Q0normalizeddegenerate}
with either $r=y=0$ or $r^2=x^2+y^2$.

When the rank of $Z(p)$ is $2$, we can assume, after an $\SO(3)$ rotation,
that the bottom row of~$Z(p)$ vanishes and that the first two rows of~$Z(p)$
are linearly independent. It then follows that $P/(\det P)\, {}^t\!Z(p)$
has its last column equal to zero, but that, as $P$ varies, the first
two columns of $P/(\det P)\, {}^t\!Z(p)$ range over all linearly independent
pairs of column vectors. Now explicitly computing the polynomial $c_Q(\mbz^*)$
for the corresponding matrix~$Q$ shows that $c_Q(\mbz^*)$ does not vanish
identically on the set of such matrices, hence it is possible to choose $P$
so that $c_Q(\mbz^*)$ does not vanish identically, and the corresponding $A_Q$
is then involutive, implying that the corresponding admissible integral
element~$E$ is Cartan-ordinary.

In either case, there exist Cartan-ordinary admissible integral elements
of~$\cI$ based at~$p$, so the Cartan--K\"ahler theorem applies, showing
that there exist admissible integral manifolds of~$\cI$ passing through
any point~$(p,\mbx,\mba)\in X^9$, and hence,
by Proposition~\ref{prop: formulation_as_EDS},
the original problem is solvable in an open neighborhood of~$p$.
Moreover, since the last nonzero Cartan character
of a generic integral flag is~$s_2 = 3$,
the space of solutions~$\omega$
depends locally on $3$ functions of $2$ variables,
in the sense of Cartan.
\end{proof}

\section{The rank 1 case}\label{sec: Zrank1}
If the rank of $Z(p)$ is either 0 or 1,
then, for all values of $Q$ as defined in~\eqref{eq: QintermsofPandZ}
with $P$ invertible, the tableau $A_Q$ fails to be involutive,
so the Cartan--K\"ahler theorem cannot be applied to prove local solvability.

However, as noted in Example~\ref{ex: noninvolutivity},
this does not necessarily preclude
the existence of integral manifolds of~$\cI$ in a neighborhood of~$p$.
Indeed, when $Z$ vanishes identically on a neighborhood of $p \in M$,
the general solution $\omega = \d \mbx$ (where $\mbx\colon M \to \bbR^3$
is an arbitrary immersion)
depends locally on 3 functions of 3 variables;
so there are actually {\em more} integral manifolds
in this case than in the case in which~$Z(p)$ has rank~$2$ or~$3$.

Nevertheless, as the following example demonstrates, even local solvability is not guaranteed in general.

\begin{Example}\label{ex: nonsolvability}
Set $\Omega = (\Omega^i) = (\Upsilon,0,0)$, where
\begin{gather}\label{define-Upsilon}
\Upsilon = u^1 \d u^2\wedge\d u^3 + u^2 \d u^3\wedge\d u^1 - 2u^3 \d u^1\wedge\d u^2.
\end{gather}
(Note that in this case, the matrix $Z$ has rank~$1$
everywhere except at the origin, where the rank is~$0$.)
We will show that there is no coframing $\omega = (\omega^i)$ on any neighborhood of $u = (u^i) = (0,0,0)$ such that the metric $g = {}^t\omega\circ \omega$ is flat.
In fact, we will show, more generally, that if $\omega$ is any coframing on $M$ such that $\d\omega^2=\d\omega^3=0$
and the metric $g = {}^t\omega\circ \omega$ is flat,
then we must have $\omega^1 \wedge \d\omega^1 = 0$.

Meanwhile, $\Upsilon$ defined as in \eqref{define-Upsilon} has no nonvanishing factor on any neighborhood of $u = (u^i) = (0,0,0)$.
In order to see this, suppose that $\Upsilon\wedge \beta = 0$,
where $\beta = b_1 \d u^1 + b_2 \d u^2 + b_3 \d u^3$.
Then
\[ u^1b_1 + u^2b_2-2u^3b_3 = 0. \]
This implies, for example,
that $u^3b_3$ must vanish on the line $u^1=u^2=0$ and hence
that $b_3$ must also vanish there. In particular, $b_3$
must vanish at the origin $u^i=0$. Similarly, $b_1$ and $b_2$
must also vanish at the origin. Thus, $\beta$ must vanish
at the origin.

To establish the general claim, let $\omega$ be a coframing on~$M^3$ such that $\d\omega^2=\d\omega^3=0$
and the metric $g = {}^t\omega\circ \omega$ is flat.
Writing
\begin{gather*}
\d\begin{pmatrix}\omega^1\\ \omega^2\\ \omega^3\end{pmatrix}
=-\begin{pmatrix}\phm0&\phm\phi^3&-\phi^2\\-\phi^3&\phm0&\phm\phi^1\\
\phm\phi^2&-\phi^1&\phm0\end{pmatrix}
\wedge \begin{pmatrix}\omega^1\\ \omega^2\\ \omega^3\end{pmatrix}
= \begin{pmatrix}\d\omega^1\\0\\0\end{pmatrix},
\end{gather*}
we see, from the vanishing of $\d\omega^2$ and $\d\omega^3$, that there must exist functions~$a^1$, $a^2$, and $a^3$
such that
\begin{gather*}
\phi^1 = a^1 \omega^1,\qquad
\phi^2 = a^2 \omega^1 - a^1 \omega^2,\qquad
\phi^3 = a^3 \omega^1 - a^1 \omega^3.
\end{gather*}
Consequently, we must have
\begin{gather*}
\d\omega^1 = -2a^1 \omega^2\wedge\omega^3 - a^2 \omega^3\wedge\omega^1 - a^3 \omega^1\wedge\omega^2.
\end{gather*}
Now, the flatness of the metric~$g$ is equivalent to the equations
\begin{gather*}
\d\phi^1-\phi^2\wedge\phi^3 = \d\phi^2-\phi^3\wedge\phi^1 = \d\phi^3-\phi^1\wedge\phi^2 = 0.
\end{gather*}
However, from the above equations, we see that
\begin{gather*}
0 = \d\phi^1-\phi^2\wedge\phi^3
= \d a^1\wedge\omega^1 -3\big(a^1\big)^2 \omega^2\wedge\omega^3
 -2a^1a^2 \omega^3\wedge\omega^1-2a^1a^3 \omega^1\wedge\omega^2.
\end{gather*}
Wedging both ends of this equation with $\omega^1$ yields
$-3\big(a^1\big)^2 \omega^1\wedge\omega^2\wedge\omega^3 = 0$. Hence $a^1 = 0$,
and we have
\begin{gather*}
\d\omega^1 = \omega^1\wedge\big(a^2 \omega^3-a^3 \omega^2\big).
\end{gather*}
In particular, $\omega^1\wedge\d\omega^1 = 0$, as claimed.

It is worthwhile to carry these calculations with the
coframing~$\omega$ a little further. Since~$a^1 = 0$,
we see that $\phi^1 = 0$, and the condition for flatness
reduces to $\d\phi^2 = \d\phi^3 = 0$.

Let us assume that $M$ is connected and simply-connected.
Fix a point $p\in M$ and write $\omega^2 = \d u^2$ and $\omega^3 = \d u^3$
for unique functions $u^2$ and $u^3$ that vanish at~$p$.
Since $\omega^1\wedge\d\omega^1 = 0$, it follows from the Frobenius Theorem that there exists
an open $p$-neighborhood $U\subset M$ on which
there exists a function $u^1$ vanishing at $p$ such that
$\omega^1 = f \d u^1$ for some nonvanishing function $f$ on~$U$.
Restricting to a smaller $p$-neighborhood if necessary, we can
arrange that $\mbu = \big(u^1,u^2,u^3\big)\colon U\to\bbR^3$ be a rectangular coordinate chart.
Now, computation yields
\begin{gather*}
\phi^1 = 0,\qquad \phi^2 = -\frac{\partial f}{\partial u^3} \d u^1,
\qquad \phi^3 = \frac{\partial f}{\partial u^2} \d u^1.
\end{gather*}
The remaining flatness conditions $\d\phi^2 = \d\phi^3 = 0$
then are equivalent to
\begin{gather*}
\frac{\partial^2 f}{\big(\partial u^2\big)^2}
= \frac{\partial^2 f}{\partial u^2\partial u^3}
= \frac{\partial^2 f}{\big(\partial u^3\big)^2} = 0.
\end{gather*}
Consequently, $f = f\big(u^1,u^2,u^3\big)$ is linear in $u^2$ and $u^3$,
so it can be written in the form $f= g_1\big(u^1\big) + g_2\big(u^1\big)u^2+g_3\big(u^1\big)u^3$
for some functions $g_1$, $g_2$, $g_3$. Since $f$ does not vanish
on $u^2 = u^3 = 0$, by changing coordinates in $u^1$, we can arrange
that $g_1\big(u^1\big) = 1$. Thus, the coframing takes the form
\begin{gather*}
\omega = \big( \big(1 + g_2\big(u^1\big)u^2 + g_3\big(u^1\big)u^3\big) \d u^1, \d u^2, \d u^3\big),
\end{gather*}
where the $p$-centered coordinates~$u^i$ are unique. Conversely,
for any two functions $g_2$ and $g_3$ on an interval containing $0\in\bbR$,
the above coframing has the property that $\d\omega^2=\d\omega^3=0$
while the metric $g = {}^t\omega\circ\omega$ is flat. Finally,
note that $\d\omega^1$ is nonvanishing at $\mbu=0$ if and only if $g_2(0)$
and $g_3(0)$ are not both zero.
\end{Example}

In light of Example~\ref{ex: nonsolvability}, it is clear that some assumptions will be required in order to ensure that local solutions exist. First, in order to avoid a singularity of the type in Example~\ref{ex: nonsolvability}, where~$Z$ vanishes at a single point, we will assume that $Z$ has constant rank 1 in some neighborhood~$U$ of~$p \in M$. This assumption is equivalent to the assumption that the 2-forms $\Omega^1$,~$\Omega^2$,~$\Omega^3$ are scalar multiples of each other and do not simultaneously vanish.

\subsection{Formulation as an exterior differential system}
We will take the following approach: Rather than assuming that $Z$ is specified in advance, we will seek to characterize functions $\mbx\colon U \to \bbR^3$, $\mba\colon U \to \SO(3)$ such that the components $\big(\omega^1, \omega^2, \omega^3\big)$ of the $\bbR^3$-valued 1-form $\omega = \mba^{-1} \d\mbx$ form a local coframing on~$U$ with the property that the 2-forms $\big(\d\omega^1, \d \omega^2, \d\omega^3\big)$ are pairwise linearly dependent and do not vanish simultaneously. Since this property is invariant under reparametrizations of the domain~$U$, it suffices to characterize 3-dimensional submanifolds $N^3 \subset \bbR^3 \times \SO(3)$ that are graphs of functions with this property. In practice, this means that the coordinates $\eurox = \big(x^1, x^2, x^3\big)$ on the open subset \smash{$V = \mbx(U) \subset \bbR^3$} may be regarded as the independent variables on any such submanifold $N^3$, and the map $\mba\colon U \to \SO(3)$ may be regarded as a function~$\mba(\eurox)$, i.e., as a~map $\mba\colon V \to \SO(3)$.
As in Section~\ref{EDS-sec}, we define the $\mathbb{R}^3$-valued $1$-forms $\xi$ and $\alpha$ on $\bbR^3 \times \SO(3)$ by equation \eqref{basis-forms-on-R3xSO3}; we will regard the 1-forms $\big(\omega^1, \omega^2, \omega^3\big)$ as the pullbacks to $V$ of the 1-forms $\big(\xi^1, \xi^2, \xi^3\big)$ on $\bbR^3 \times \SO(3)$.

Any 3-dimensional submanifold $N^3$ of the desired form must have the property that the 1-forms $\big(\xi^1, \xi^2, \xi^3\big)$ restrict to be linearly independent on $N^3$ and hence form a basis for the linearly independent 1-forms on $N^3$. Thus the restrictions of the 1-forms $\big(\alpha^1, \alpha^2, \alpha^3\big)$ to $N^3$ may be written as
\[ \alpha^i = y^i_j \xi^j \]
for some functions $y^i_j$ on $N^3$. Then from the structure equations \eqref{structure-eqs-on-R3xSO3}, we have
\begin{gather}
\begin{pmatrix} \d\xi^1 \\ \d\xi^2 \\ \d\xi^3 \end{pmatrix} =
 -\begin{pmatrix}
-(y^2_2 + y^3_3) & y^2_1 & y^3_1 \\
y^1_2 & -(y^3_3 + y^1_1) & y^3_2 \\
y^1_3 & y^2_3 & -(y^1_1 + y^2_2) \end{pmatrix}
\begin{pmatrix} \xi^2 \wedge \xi^3 \\ \xi^3 \wedge \xi^1 \\ \xi^1 \wedge \xi^2 \end{pmatrix} \nonumber\\
\hphantom{\begin{pmatrix} \d\xi^1 \\ \d\xi^2 \\ \d\xi^3 \end{pmatrix}}{}
 = -\big({}^t\hskip-1pt\big(y^i_j\big) - \tr \big(\big(y^i_j\big)\big) I_3 \big) \begin{pmatrix} \xi^2 \wedge \xi^3 \\ \xi^3 \wedge \xi^1 \\ \xi^1 \wedge \xi^2 \end{pmatrix} .
\label{matrix-for-structure-eqs}
\end{gather}
The condition that the 2-forms $\big(\d\omega^1, \d \omega^2, \d\omega^3\big)$ are pairwise linearly dependent and do not vanish simultaneously on~$U$ is equivalent to the condition that the same is true for the 2-forms $\big(\d\xi^1, \d \xi^2, \d\xi^3\big)$ on $N^3$, and hence that the matrix in equation~\eqref{matrix-for-structure-eqs} has rank 1 on~$N^3$. This, in turn, is equivalent to the condition that
\[ \big(y^i_j\big) = \lambda I_3 + M \]
for some matrix $M$ of constant rank 1 on $N^3$, with $\lambda = -\tfrac{1}{2} (\tr M)$.

\begin{Remark}\label{lambda-remark}The function $\lambda$ has the following interpretation: equations \eqref{matrix-for-structure-eqs} imply that on any integral manifold, the $1$-forms $\big(\omega^1, \omega^2, \omega^3\big)$ satisfy the equation
\begin{gather*}
\omega^1 \wedge \d \omega^1 + \omega^2 \wedge \d \omega^2 + \omega^3 \wedge \d \omega^3 = -2\lambda \omega^1 \wedge \omega^2 \wedge \omega^3.
\end{gather*}
As we will see, the cases where $\lambda=0$ and $\lambda \neq 0$ behave quite differently.
\end{Remark}

Since the matrix $M$ has rank 1 on $N^3$, it can be written as
\[ M = \mbv {}^t\hskip-1pt\mbw = \begin{pmatrix} v^1 \\ v^2 \\ v^3 \end{pmatrix}
\begin{pmatrix} w_1 & w_2 & w_3 \end{pmatrix} \]
for some nonvanishing $\bbR^3$-valued functions $\mbv$, $\mbw$ on $N^3$ that are determined up to a scaling transformation
\[ \mbv \to r \mbv, \qquad \mbw \to r^{-1} \mbw. \]
Without loss of generality, we may take advantage of this scaling transformation to assume that~$\mbv$ is a unit vector at each point of~$N^3$. Then, since $\tr (M) = -2\lambda$, we can choose an oriented, orthonormal frame field $(\mbf_1, \mbf_2, \mbf_3)$ along $N^3$ with the property that
\[ \mbv = \mbf_1, \qquad \mbw = -2 \lambda \mbf_1 + \mu \mbf_2 \]
for some real-valued function $\mu$ on $N^3$.

Let $\mbf \in \SO(3)$ denote the orthogonal matrix
\[ \mbf = [ \begin{matrix} \mbf_1 & \mbf_2 & \mbf_3\end{matrix}]. \]
Since we have $\mbf \, {}^t\hskip-1pt\mbf = I_3$, we can write the matrix $\big[y^i_j\big]$ as
\begin{gather*}
\big[y^i_j\big] = \lambda I_3 + M = \lambda \big(\mbf I_3 \, {}^t\hskip-1pt\mbf\big) + \mbf_1 \big({-}2\lambda\, {}^t\hskip-1pt \mbf_1 + \mu \, {}^t\hskip-1pt \mbf_2\big) \\
\hphantom{\big[y^i_j\big]}{} = \mbf \left(\lambda I_3 + \begin{pmatrix} -2\lambda & \mu & 0 \\ 0 & 0 & 0 \\ 0 & 0 & 0 \end{pmatrix} \right) {}^t\hskip-1pt\mbf
 = \mbf \begin{pmatrix} -\lambda & \mu & 0 \\ 0 & \lambda & 0 \\ 0 & 0 & \lambda \end{pmatrix} {}^t\hskip-1pt\mbf.
\end{gather*}

This discussion suggests that we introduce the following exterior differential system: Let $X$ denote the $11$-dimensional manifold
\[ X = \bbR^3 \times \SO(3) \times \SO(3) \times \bbR^2, \]
with coordinates $(\eurox, \euroa, \eurof, (\lambda, \mu))$. We may take the 1-forms
$\big(\xi^i, \alpha^i, \varphi^i, \d\lambda, \d\mu\big)$
as a basis for the 1-forms on $X$, where the 1-forms $\big(\varphi^1, \varphi^2, \varphi^3\big)$ are the standard Maurer--Cartan forms on the second copy of $\SO(3)$ and so are defined by the equation
\[ [\varphi] = \begin{pmatrix} 0 & \varphi^3 & -\varphi^2 \\ -\varphi^3 & 0 & \varphi^1 \\ \varphi^2 & -\varphi^1 & 0
\end{pmatrix} = \eurof^{-1} \d\eurof. \]
Let $\cI$ be the exterior differential system on $X$ that is generated by the three $1$-forms $\big(\theta^1, \theta^2, \theta^3\big)$, where
\[ \begin{pmatrix} \theta^1 \\ \theta^2 \\ \theta^3 \end{pmatrix} = \begin{pmatrix} \alpha^1 \\ \alpha^2 \\ \alpha^3 \end{pmatrix} - \eurof \begin{pmatrix} -\lambda & \mu & 0 \\ 0 & \lambda & 0 \\ 0 & 0 & \lambda \end{pmatrix} {}^t\hskip-1pt\eurof
\begin{pmatrix} \xi^1 \\ \xi^2 \\ \xi^3 \end{pmatrix}. \]

\begin{Proposition}\label{prop: formulation_as_EDS-rank1}
If $N^3\subset X$ is an integral manifold of~$\cI$ to which $\xi$ pulls back to be a coframing, then each point of~$N^3$ has an open neighborhood that can be written as a graph
\[ \bigl\{\bigl(\eurox, \mba(\eurox), \mbf(\eurox), \lambda(\eurox), \mu(\eurox) \bigr)\, \vrule\, \eurox \in V \bigr\} \subset X \]
for some open set $V \subset \bbR^3$ and smooth maps $\mba, \mbf\colon V \to \SO(3)$ and $\lambda, \mu\colon V \to \bbR$. Moreover, on~$V$, the coframing $\xi = \mba^{-1} \d \eurox$ satisfies the structure equations
\begin{gather*} 
\begin{pmatrix} \d\xi^1 \\ \d\xi^2 \\ \d\xi^3 \end{pmatrix} = \mbf \begin{pmatrix} -2\lambda & 0 & 0 \\ \mu & 0 & 0 \\ 0 & 0 & 0 \end{pmatrix} {}^t\hskip-1pt \mbf
\begin{pmatrix} \xi^2 \wedge \xi^3 \\ \xi^3 \wedge \xi^1 \\ \xi^1 \wedge \xi^2 \end{pmatrix},
\end{gather*}
and the metric~$g = {}^t\xi\circ \xi = {}^t\d \eurox\circ \d \eurox$ is flat.

Conversely, if $V\subset \bbR^3$ is a simply-connected open subset on which there exists a coframing~$\xi\colon TV\to\bbR^3$ satisfying $($i$)$ the $2$-forms $\d\xi^i$ are pairwise linearly dependent and nowhere simultaneously vanishing, and $($ii$)$ the metric~$g={}^t\xi\circ \xi$ is flat, then there exist mappings~$\mba, \mbf\colon V \to \SO(3)$ and $\lambda, \mu\colon V \to \bbR$ such that $\xi = \mba^{-1} \d \eurox$. Moreover, the immersion $\iota\colon V\to X$ defined by $\iota(\eurox) = \bigl(\eurox, \mba(\eurox), \mbf(\eurox), \lambda(\eurox), \mu(\eurox) \bigr)$ is an integral manifold of~$\cI$ that pulls~$\xi$ back to be a coframing of~$V$.
\end{Proposition}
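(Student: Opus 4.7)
The plan is to follow the same pattern as the proof of Proposition~\ref{prop: formulation_as_EDS}, with one additional algebraic step that extracts the smooth data $(\mbf,\lambda,\mu)$ from the rank-$1$ condition~(i).

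For the forward direction, suppose $N^3\subset X$ is an integral manifold on which $\xi$ restricts to a coframing. Since $\euroa$ is invertible, $\d\eurox = \euroa\xi$ is also a coframing of $N^3$, so the projection $(\eurox,\euroa,\eurof,\lambda,\mu)\mapsto\eurox$ is a local diffeomorphism, realizing $N^3$ locally as a graph of the claimed form. The integrality condition $\theta^i = 0$ then reads $\alpha = \mbf K\,{}^t\mbf\,\xi$, where
\[ K = \begin{pmatrix} -\lambda & \mu & 0 \\ 0 & \lambda & 0 \\ 0 & 0 & \lambda \end{pmatrix}. \]
Substituting into~\eqref{matrix-for-structure-eqs} with $y = \mbf K\,{}^t\mbf$, and using $\tr y = \tr K = \lambda$ together with ${}^t y - (\tr y)I_3 = \mbf\,({}^t K - \lambda I_3)\,{}^t\mbf$, reproduces the claimed structure equations for $\d\xi^i$. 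Flatness of $g = {}^t\xi\circ\xi = {}^t\d\eurox\circ\d\eurox$ is then automatic.

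For the converse, I would first construct $\mba$ exactly as in the proof of Proposition~\ref{prop: formulation_as_EDS}: the fundamental lemma of Riemannian geometry plus flatness of $g$ yield an $\bbR^3$-valued $1$-form $\phi$ on $V$ with $\d\xi = -[\phi]\wedge\xi$ and $\d\phi = -\tfrac12[\phi]\wedge\phi$, and simple-connectivity of $V$ produces $\mbx\colon V\to\bbR^3$ and $\mba\colon V\to\SO(3)$ with $\xi = \mba^{-1}\d\mbx$ and $[\phi] = \mba^{-1}\d\mba$. After shrinking $V$ so that $\mbx$ is a diffeomorphism onto its image, I reparametrize by $\mbx$ to get $\xi = \mba^{-1}\d\eurox$. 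Writing $\phi^i = y^i_j\xi^j$, condition~(i) combined with~\eqref{matrix-for-structure-eqs} forces ${}^t y - (\tr y)I_3$ to have constant rank~$1$, so setting $\lambda = \tr y$ and $M = y - \lambda I_3$ yields a smooth rank-$1$ matrix $M$ with $\tr M = -2\lambda$. I would then locally factor $M = \mbv\,{}^t\mbw$ with $|\mbv| = 1$; the trace identity $\mbv\cdot\mbw = -2\lambda$ forces $\mbw = -2\lambda\mbv + \mu\mbf_2$ for a smooth scalar $\mu$ and a smooth unit vector $\mbf_2\perp\mbv$. Completing to an oriented orthonormal frame $\mbf = [\mbv\ \mbf_2\ \mbf_3]\in\SO(3)$ and invoking the algebraic identity in the paragraph preceding the Proposition gives $y = \mbf K\,{}^t\mbf$, whence $\iota^*\theta = \phi - y\xi = 0$ and $\iota$ is the desired integral manifold.

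The technically delicate step will be the smooth local factorization $M = \mbv\,{}^t\mbw$ and the smooth choice of $\mbf_2$. Since $M$ is nowhere vanishing (by condition~(i)), its image line bundle is smoothly trivial on a simply connected neighborhood, so $\mbv$ exists smoothly up to a global sign and $\mbw$ is then uniquely determined. Where $\mu \neq 0$ the vector $\mbf_2$ is uniquely determined and smooth; where $\mu$ vanishes identically on an open subset there is residual freedom, but the identity $y = \mbf K\,{}^t\mbf$ holds for any smooth choice of orthonormal $\mbf_2\perp\mbv$, so smoothness is never actually obstructed.
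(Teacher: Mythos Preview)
Your proposal is correct and follows essentially the same approach as the paper, whose proof consists of the single sentence ``The proof is similar to that of Proposition~\ref{prop: formulation_as_EDS}.'' You have correctly identified what ``similar'' means here---repeat the flatness/simple-connectivity argument to produce~$\mba$, then carry out the rank-$1$ factorization $M=\mbv\,{}^t\mbw$ that the paper performs in the discussion immediately preceding the Proposition---and your treatment of the smoothness of~$\mbf_2$ is, if anything, more careful than the paper's.
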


\begin{proof}The proof is similar to that of Proposition~\ref{prop: formulation_as_EDS}.
\end{proof}

It turns out that the calculations involved in the analysis of this exterior differential system are much simpler if we introduce the 1-forms
\[ \begin{pmatrix} \chi^1 \\ \chi^2 \\ \chi^3 \end{pmatrix} = {}^t\hskip-1pt\eurof \begin{pmatrix} \xi^1 \\ \xi^2 \\ \xi^3 \end{pmatrix}\]
on $X$ and replace $\big(\xi^1, \xi^2, \xi^3\big)$ by the equivalent expressions
\[ \begin{pmatrix} \xi^1 \\ \xi^2 \\ \xi^3 \end{pmatrix} = \eurof \begin{pmatrix} \chi^1 \\ \chi^2 \\ \chi^3 \end{pmatrix}. \]
It is straightforward to show that the 1-forms $\big(\chi^1, \chi^2, \chi^3\big)$ satisfy the structure equations
\begin{gather*}
\begin{pmatrix} \d\chi^1 \\ \d\chi^2 \\ \d\chi^3 \end{pmatrix} =
-\left( [\varphi] + [{}^t\hskip-1pt\eurof \alpha] \right) \wedge \begin{pmatrix} \chi^1 \\ \chi^2 \\ \chi^3 \end{pmatrix} \\
\hphantom{\begin{pmatrix} \d\chi^1 \\ \d\chi^2 \\ \d\chi^3 \end{pmatrix}}{}
 \equiv -\begin{pmatrix} 0 & \varphi^3 & -\varphi^2 \\ -\varphi^3 & 0 & \varphi^1 \\ \varphi^2 & -\varphi^1 & 0
\end{pmatrix} \wedge \begin{pmatrix} \chi^1 \\ \chi^2 \\ \chi^3 \end{pmatrix} + \begin{pmatrix} 2\lambda \\ -\mu \\ 0 \end{pmatrix} \chi^2 \wedge \chi^3 \mod{\cI},
\end{gather*}
and we can now write the generators of $\cI$ as
\begin{gather} \label{rank1-generators-for-I}
\begin{pmatrix} \theta^1 \\ \theta^2 \\ \theta^3 \end{pmatrix} = \begin{pmatrix} \alpha^1 \\ \alpha^2 \\ \alpha^3 \end{pmatrix} - \eurof \begin{pmatrix} -\lambda & \mu & 0 \\ 0 & \lambda & 0 \\ 0 & 0 & \lambda \end{pmatrix}
\begin{pmatrix} \chi^1 \\ \chi^2 \\ \chi^3 \end{pmatrix}.
\end{gather}
The exterior differential system $\cI$ is generated algebraically by the $1$-forms $\big(\theta^1, \theta^2, \theta^3\big)$ and their exterior derivatives $\big(\d\theta^1, \d\theta^2, \d\theta^3\big)$.

The value of $\lambda$ on any particular integral manifold $N^3$ plays a crucial role here. If $\lambda=0$ on $N^3$, then the 1-forms $\big(\alpha^1, \alpha^2, \alpha^3\big)$ are all multiples of the single $1$-form $\chi^2$, and therefore the corresponding map $\mba\colon V \to \SO(3)$ has rank 1; in particular, the image of~$\mba$ is a curve in~$\SO(3)$. On the other hand, if $\lambda \neq 0$ on $N^3$, then the 1-forms $\big(\alpha^1, \alpha^2, \alpha^3\big)$ are linearly independent, and therefore the corresponding map $\mba\colon V \to \SO(3)$ has rank~3 and is a local diffeomorphism from~$V$ onto an open subset of~$\SO(3)$. Due to these different behaviors, the analysis of this exterior differential system varies considerably depending on whether or not $\lambda$ vanishes, and so we will consider these cases separately.

\subsection[The case $\lambda = 0$]{The case $\boldsymbol{\lambda = 0}$}\label{lambda-zero-sec}

Consider the restriction $\bar{\cI}$ of $\cI$ to the codimension 1 submanifold $\bar{X}$ of $X$ defined by the equation $\lambda=0$.
The rank 1 condition implies that any integral manifold must be contained in the open set where $\mu \neq 0$, and the expressions~\eqref{rank1-generators-for-I} reduce to
\begin{gather}\label{rank1-lambda-zero-thetas}
\begin{pmatrix} \theta^1 \\ \theta^2 \\ \theta^3 \end{pmatrix} =
\begin{pmatrix} \alpha^1 \\ \alpha^2 \\ \alpha^3 \end{pmatrix} - \eurof
\begin{pmatrix} 0 & \mu & 0 \\ 0 & 0 & 0 \\ 0 & 0 & 0 \end{pmatrix}
\begin{pmatrix} \chi^1 \\ \chi^2 \\ \chi^3 \end{pmatrix}.
\end{gather}
Differentiating equations~\eqref{rank1-lambda-zero-thetas}, reducing modulo $\big(\theta^1, \theta^2, \theta^3\big)$, and multiplying on the left by ${}^t\hskip-1pt \eurof$ yields
\begin{gather}\label{rank1-lambda-zero-dthetas}
 {}^t\hskip-1pt \eurof \begin{pmatrix} \d\theta^1 \\ \d\theta^2 \\ \d\theta^3 \end{pmatrix} \equiv
- \begin{pmatrix} \pi_1 & \pi_2 & \pi_3 \\ 0 & -\pi_1 & 0 \\ 0 & \pi_4 & 0 \end{pmatrix} \wedge \begin{pmatrix} \chi^1 \\ \chi^2 \\ \chi^3 \end{pmatrix} \mod{\theta^1, \theta^2, \theta^3},
\end{gather}
where
\[
\pi_1 = \mu \varphi^3, \qquad \pi_2 = \d\mu + \mu^2 \chi^3, \qquad \pi_3 = -\mu \varphi^1, \qquad \pi_4 = \mu \varphi^2.
\]
The tableau matrix in equation \eqref{rank1-lambda-zero-dthetas} has Cartan characters $s_1 = 3$, $s_2 = 1$, $s_3=0$, and the space of integral elements at each point of $\bar{X}$ is 5-dimensional, parametrized by
\begin{gather*}
\pi_1 = p_1 \chi^2, \qquad
\pi_2 = p_1 \chi^1 + p_2 \chi^2 + p_3 \chi^3, \qquad
\pi_3 = p_3 \chi^2 + p_4 \chi^3, \qquad
\pi_4 = p_5 \chi^2,
\end{gather*}
with $p_1, p_2, p_3, p_4, p_5 \in \bbR$. Since $s_1 + 2 s_2 + 3 s_3 = 5$, the system $\bar{\cI}$ is involutive, with integral manifolds locally parametrized by~1 function of 2 variables.

As a result of this computation and Remark~\ref{lambda-remark}, we have the following theorem.

\begin{Theorem}\label{lambda-zero-function-count-theorem}
The space of local orthonormal coframings $\big(\omega^1, \omega^2, \omega^3\big)$ on an open subset of $\bbR^3$ whose exterior derivatives $\big(\d\omega^1, \d\omega^2, \d\omega^3\big)$ are pairwise linearly dependent and do not simultaneously vanish and satisfy the additional property that
\[ \omega^1 \wedge \d \omega^1 + \omega^2 \wedge \d \omega^2 + \omega^3 \wedge \d \omega^3 = 0 \]
is locally parametrized by $1$ function of $2$ variables.
\end{Theorem}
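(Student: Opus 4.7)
The plan is to reduce the theorem to the involutivity computation for $\bar{\cI}$ on $\bar{X} = \{\lambda = 0\} \subset X$ that has just been carried out, and then read off the function count from Cartan's test together with the translation between coframings and integral manifolds given by Proposition~\ref{prop: formulation_as_EDS-rank1}.

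First I would invoke Remark~\ref{lambda-remark}: on any integral manifold of $\cI$, the pulled-back $1$-forms $\big(\omega^1,\omega^2,\omega^3\big)$ satisfy
\begin{gather*}
\omega^1\wedge \d\omega^1 + \omega^2\wedge \d\omega^2 + \omega^3\wedge \d\omega^3 \;=\; -2\lambda\,\omega^1\wedge\omega^2\wedge\omega^3,
\end{gather*}
so the extra hypothesis in the theorem is exactly the condition that the integral manifold lie in the submanifold $\bar{X}$. Together with Proposition~\ref{prop: formulation_as_EDS-rank1}, this puts the space of coframings under consideration in local bijection with the space of admissible integral manifolds of $\bar{\cI}$.

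Next I would apply the tableau computation displayed in~\eqref{rank1-lambda-zero-dthetas}, whose Cartan characters are $(s_1,s_2,s_3) = (3,1,0)$, and observe that the $3$-dimensional integral elements above each point of $\bar{X}$ form the announced $5$-parameter family. Since $s_1 + 2 s_2 + 3 s_3 = 5$ matches this dimension, Cartan's test is satisfied and $\bar{\cI}$ is involutive; the Cartan--K\"ahler theorem then produces admissible integral manifolds through every point of $\bar{X}$. Because the last nonzero character is $s_2 = 1$, the local moduli depend, in Cartan's sense, on $1$ function of $2$ variables, which is exactly the claim of the theorem.

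The only point I would want to verify carefully is that every coframing meeting the hypotheses corresponds to an integral manifold actually lying in the open locus where $\mu \neq 0$, so that the reduction to the generators~\eqref{rank1-lambda-zero-thetas} is legitimate. This is immediate: the hypothesis that $\big(\d\omega^1,\d\omega^2,\d\omega^3\big)$ be pairwise proportional and nowhere simultaneously vanishing forces the rank-$1$ matrix $M$ to be nowhere zero, and when $\lambda = 0$ this is equivalent to $\mu \neq 0$. I do not anticipate a genuine obstacle here, since all of the serious work, namely the choice of a good moving frame reducing $M$ to its normal form and the Cartan-character calculation, has already been done in setting up $\bar{\cI}$.
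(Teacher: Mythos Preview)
Your proposal is correct and follows essentially the same approach as the paper: the theorem is stated immediately after the involutivity computation for $\bar{\cI}$ with the remark that it follows ``as a result of this computation and Remark~\ref{lambda-remark},'' which is precisely the logic you spell out. Your explicit check that the nonvanishing hypothesis forces $\mu\neq 0$ on $\bar X$ is a welcome clarification but introduces nothing new methodologically.
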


This function count suggests that, if the rank 1 matrix $Z$ on $M$ is specified in advance, local solutions are likely to exist for arbitrary, generic choices of $Z$. More specifically, by Darboux's Theorem, the rank $1$ condition implies that we can find local coordinates $\big(u^1, u^2, u^3\big)$ on some neighborhood $U$ of any point $p \in M$ such that
\[ \Omega = \mbz\big(u^1, u^2\big) \d u^1 \wedge \d u^2 \]
for some smooth, nonvanishing $\bbR^3$-valued function $\mbz\big(u^1, u^2\big)$. Moreover, by local coordinate transformations of the form $\big(u^1, u^2, u^3\big) \to \big(\tilde{u}^1\big(u^1, u^2\big), \tilde{u}^2\big(u^1, u^2\big), u^3\big)$, we might expect that we could normalize~2 of the~3 functions $z^i\big(u^1, u^2\big)$. For example, if $\d\big(z^1/z^2\big)(p) \neq 0$, then we could choose the functions $\tilde{u}^1$, $\tilde{u}^2$ in a neighborhod of $p$ such that $z^1\big(\tilde{u}^1, \tilde{u}^2\big) = 1$ and $z^2\big(\tilde{u}^1, \tilde{u}^2\big) = \tilde{u}^1$. Then the vector $\Omega$ is characterized by the remaining single function of 2 variables $z^3\big(\tilde{u}^1, \tilde{u}^2\big)$. Since this function account agrees with that for the space of integral manifolds of~$\cI$, one might hope that generic choices for the function $\mbz\big(u^1, u^2\big)$ would admit solutions.

In Section~\ref{explicit-solns-sec}, we will show that this is in fact the case; specifically, a mild nondege\-ne\-ra\-cy condition on the function $\mbz\big(u^1, u^2\big)$ suffices to guarantee the existence of solutions. (See Theorem~\ref{big-theorem} below for details.)

\subsection[The case $\lambda \neq 0$]{The case $\boldsymbol{\lambda \neq 0}$}

Now consider integral manifolds of $\cI$ contained in the open subset of $X$ where $\lambda \neq 0$. First we show that there are no integral manifolds on which $\mu=0$. To this end, suppose for the sake of contradiction that $\mu=0$ on some integral manifold $N^3$. Then the expressions~\eqref{rank1-generators-for-I} reduce to
\[
\begin{pmatrix} \theta^1 \\ \theta^2 \\ \theta^3 \end{pmatrix} = \begin{pmatrix} \alpha^1 \\ \alpha^2 \\ \alpha^3 \end{pmatrix} - \eurof \begin{pmatrix} -\lambda & 0 & 0 \\ 0 & \lambda & 0 \\ 0 & 0 & \lambda \end{pmatrix}
\begin{pmatrix} \chi^1 \\ \chi^2 \\ \chi^3 \end{pmatrix}.
\]
Differentiating these equations, reducing modulo $\big(\theta^1, \theta^2, \theta^3\big)$, and multiplying on the left by ${}^t\hskip-1pt \eurof$ yields
\[
 {}^t\hskip-1pt \eurof \begin{pmatrix} \d\theta^1 \\ \d\theta^2 \\ \d\theta^3 \end{pmatrix} \equiv
-\begin{pmatrix} \pi_1 & \pi_2 & \pi_3 \\ \pi_2 & -\pi_1 & 0 \\ \pi_3 & 0 & -\pi_1 \end{pmatrix} \wedge \begin{pmatrix} \chi^1 \\ \chi^2 \\ \chi^3 \end{pmatrix} + \begin{pmatrix} \lambda^2 \chi^2 \wedge \chi^3 \\ 0 \\ 0 \end{pmatrix}
\mod{\theta^1, \theta^2, \theta^3},
\]
where
\[ \pi_1 = -\d\lambda, \qquad \pi_2 = 2\lambda \varphi^3 + \lambda^2 \chi^3, \qquad \pi_3 = -\big(2\lambda \varphi^2 + \lambda^2 \chi^2\big). \]
Since $\lambda \neq 0$, the torsion cannot be absorbed and this system has no integral elements, and hence no integral manifolds. Thus we conclude that there are no integral manifolds unless $\mu \neq 0$, and henceforth we assume that this is the case.

Now, differentiating equations \eqref{rank1-generators-for-I}, reducing modulo $\big(\theta^1, \theta^2, \theta^3\big)$, and multiplying on the left by ${}^t\hskip-1pt \eurof$ yields the surprisingly simple formula
\begin{gather}\label{rank1-lambda-nonzero-dthetas}
 {}^t\hskip-1pt \eurof \begin{pmatrix} \d\theta^1 \\ \d\theta^2 \\ \d\theta^3 \end{pmatrix} \equiv
-\begin{pmatrix} \pi_1 & \pi_4 & \pi_5 \\ 2\lambda \pi_2 & -\pi_1 & 0 \\ 2\lambda \pi_3 & -\mu \pi_3 & -\pi_1 + \mu \pi_2 \end{pmatrix} \wedge \begin{pmatrix} \chi^1 \\ \chi^2 \\ \chi^3 \end{pmatrix} \mod{\theta^1, \theta^2, \theta^3},
\end{gather}
where
\begin{gather*}
\pi_1 = -\d\lambda + \mu \varphi^3, \qquad
\pi_2 = \varphi^3 + \tfrac{1}{2}\lambda \chi^3, \qquad
\pi_3 = -\big(\varphi^2 + \tfrac{1}{2}\lambda \chi^2\big), \\
\pi_4 = \d\mu + 2\lambda \varphi^3 + \big(3\lambda^2 + \mu^2\big) \chi^3, \qquad
\pi_5 = -\mu \varphi^1 - 2\lambda \varphi^2.
\end{gather*}
The tableau matrix in equation \eqref{rank1-lambda-nonzero-dthetas} has Cartan characters $s_1 = 3$, $s_2 = 2$, $s_3 = 0$, and the space of integral elements is 6-dimensional, parametrized by
\begin{gather*}
\pi_1 = - 2\lambda p_1 \chi^1 + \big(2\mu p_1 + \mu^2 p_2\big) \chi^2, \\
\pi_2 = 2\lambda p_2 \chi^1 + p_1 \chi^2, \\
\pi_3 = 2\lambda p_3 \chi^1 - \mu p_3 \chi^2 + (p_1 + \mu p_2) \chi^3, \\
\pi_4 = \big(2\mu p_1 + \mu^2 p_2\big) \chi^1 + p_4 \chi^2 + p_5 \chi^3, \\
\pi_5 = p_5 \chi^2 + p_6 \chi^3,
\end{gather*}
with $p_1, p_2, p_3, p_4, p_5, p_6 \in \bbR$. Since $s_1 + 2 s_2 + 3 s_3 = 7 > 6$, the system $\cI$ is not involutive, and we need to prolong.

After some rearranging, we can parametrize the space of integral elements of $\cI$ more ma\-na\-geably for computational purposes as
\begin{gather*}
\d\lambda = 2 \lambda u_3 \chi^1 - \mu u_3 \chi^2 - \tfrac{1}{2}\lambda \mu \chi^3, \\
\d\mu = \big(2\mu u_3 - \big(4\lambda^2 + \mu^2\big)u_4\big) \chi^1 + u_1 \chi^2 - \big(\mu u_5 + \lambda^2 + \mu^2\big) \chi^3, \\
\varphi^1 = 2\lambda^2 u_6 \chi^1 + (u_5 - \lambda\mu u_6) \chi^2 + u_2 \chi^3, \\
\varphi^2 = -\lambda \mu u_6 \chi^1 + \tfrac{1}{2}\big(\mu^2 u_6 - \lambda\big) \chi^2 - u_3 \chi^3, \\
\varphi^3 = 2\lambda u_4 \chi^1 + (u_3 - \mu u_4) \chi^2 - \tfrac{1}{2}\lambda \chi^3,
\end{gather*}
with $u_1, u_2, u_3, u_4, u_5, u_6 \in \bbR$. The prolongation $\cI^{(1)}$ of $\cI$ is the exterior differential system on the manifold $X^{(1)} = X \times \bbR^6$, with coordinates $(u_1,\ldots, u_6)$ on the $\bbR^6$ factor, generated by the 1-forms $\big(\theta^1, \theta^2, \theta^3\big)$, together with the 1-forms
\begin{gather}
\theta^4 = \d\lambda - 2 \lambda u_3 \chi^1 + \mu u_3 \chi^2 + \tfrac{1}{2}\lambda \mu \chi^3, \nonumber\\
\theta^5 = \d\mu - \big(2\mu u_3 - \big(4\lambda^2 + \mu^2\big)u_4\big) \chi^1 - u_1 \chi^2 + \big(\mu u_5 + \lambda^2 + \mu^2\big) \chi^3, \nonumber\\
\theta^6 = \varphi^1 - 2\lambda^2 u_6 \chi^1 - (u_5 - \lambda\mu u_6) \chi^2 - u_2 \chi^3, \nonumber\\
\theta^7 = \varphi^2 + \lambda \mu u_6 \chi^1 - \tfrac{1}{2}\big(\mu^2 u_6 - \lambda\big) \chi^2 + u_3 \chi^3, \nonumber\\
\theta^8 = \varphi^3 - 2\lambda u_4 \chi^1 - (u_3 - \mu u_4) \chi^2 + \tfrac{1}{2}\lambda \chi^3.\label{rank1-forms-in-I1}
\end{gather}

From this point on, the details of the computation become rather unwieldy, so we will just give a sketch of the next few steps.\footnote{All computations were carried out with the assistance of MAPLE, including the Cartan package which was written by the second author and is available at \url{http://math.colorado.edu/~jnc/Maple.html}.}
Computing the 2-forms $\big(\d\theta^4,\ldots,\d\theta^8\big)$ and reducing modulo the 1-forms $\big(\theta^1,\ldots,\theta^8\big)$ yields a system for which the torsion cannot be absorbed~-- and hence there are no integral elements~-- except along the codimension~1 submanifold $X' \subset X^{(1)}$ defined by the equation
\begin{gather}\label{rank1-torsion1}
2\lambda u_1 - 2\lambda \mu u_2 - 4\lambda^2 u_3 - \big(4 \lambda^2 \mu + \mu^3\big) u_4 = 0.
\end{gather}
Thus any integral manifold of the system $\cI^{(1)}$ on $X^{(1)}$ must be contained in $X'$.

We may parametrize the solution space to equation \eqref{rank1-torsion1} by
\begin{gather}
u_1 = \mu v_1 + 2\lambda v_2 +\big(4\lambda^2\mu + \mu^3\big) v_4, \qquad u_2 = v_1, \qquad u_3 = v_2, \nonumber\\
u_4 = 2\lambda v_4, \qquad u_5 = v_3, \qquad u_6 = v_5,\label{rank1-torsion-param}
\end{gather}
with $v_1, v_2, v_3, v_4, v_5 \in \bbR$. Substituting the expressions \eqref{rank1-torsion-param} into equations \eqref{rank1-forms-in-I1} yields a new EDS $\cI'$ on $X' \cong X \times \bbR^5$ with the property that the integral manifolds of $\cI^{(1)}$ are precisely the integral manifolds of the system $\cI'$ on $X'$.

Now computing the 2-forms $\big(\d\theta^4,\ldots,\d\theta^8\big)$ and reducing modulo the 1-forms $\big(\theta^1,\ldots,\theta^8\big)$ yields a system for which the torsion can be absorbed. The tableau matrix has Cartan characters $s_1 = 5$, $s_2 = s_3 = 0$, but the space of integral elements is only 4-dimensional. Since $s_1 + 2 s_2 + 3 s_3 = 5 > 4$, the system $\cI'$ is not involutive, and so we need to prolong again. The prolongation~$\cI'^{(1)}$ is the EDS on the manifold $X'^{(1)} = X' \times \bbR^4$, with coordinates $(w_1,\ldots, w_4)$ on the $\bbR^4$ factor, generated by the 1-forms $\big(\theta^1,\ldots, \theta^8\big)$, together with the $1$-forms
\begin{gather}
\theta^9 = \pi_9 + w_1 \chi^2 - w_2 \chi^3, \qquad
\theta^{10} = \pi_{10}, \qquad
\theta^{11} = \pi_{11} + w_2 \chi^2 + w_1 \chi^3, \nonumber\\
\theta^{12} = \pi_{12} - 4\lambda^2 w_4 \chi^1 + 2\lambda \mu w_4 \chi^2, \qquad
\theta^{13} = \pi_{13} + 2\lambda w_3 \chi^1 - \mu w_3 \chi^2,\label{rank1-forms-in-I'1}
\end{gather}
where, for each $j=1,\ldots, 5$, the $1$-form $\pi_{j+8}$ has the form
\[ \pi_{j+8} = \d v_j - P_{jk} \chi^k, \]
and the functions $P_{jk}$ are polynomials in $(v_1,\ldots, v_5)$ with coefficients that are rational functions of $\lambda$ and $\mu$ with nonvanishing denominators.

Computing the 2-forms $\big(\d\theta^9,\ldots,\d\theta^{13}\big)$ and reducing modulo the 1-forms $\big(\theta^1, \ldots, \theta^{13}\big)$ yields a system for which the torsion cannot be absorbed---and hence there are no integral elements~-- except along the codimension 2 submanifold $X'' \subset X'^{(1)}$ defined by two independent equations that are linear in the variables $(w_1, w_2, w_3, w_4)$. These equations can be solved for $w_3$ and $w_4$, yielding expressions of the form
\begin{gather}
w_3 = \frac{8\mu}{3\big(4\lambda^2 + \mu^2\big)^2} (4\lambda v_4 w_1 - \mu v_5 w_2 ) + \frac{1}{\lambda \mu \big(4\lambda^2 + \mu^2\big)^3} P_3, \nonumber\\
w_4 = -\frac{\mu^2}{3\lambda^2 \big(4\lambda^2 + \mu^2\big)^2} (\mu v_5 w_1 + 4 \lambda v_4 w_2 ) + \frac{1}{\lambda^3 \big(4\lambda^2 + \mu^2\big)^3} P_4,\label{w3w4}
\end{gather}
where $P_3$ and $P_4$ are polynomials in the variables $(\lambda, \mu, v_1, \ldots, v_5)$.
Substituting the expres\-sions~\eqref{w3w4} into equations \eqref{rank1-forms-in-I'1} yields a new EDS $\cI''$ on $X'' \cong X' \times \bbR^2$ with the property that the integral manifolds of $\cI'^{(1)}$ are precisely the integral manifolds of the system $\cI''$ on $X''$.

Now computing the 2-forms $\big(\d\theta^4,\ldots,\d\theta^8\big)$ and reducing modulo the 1-forms $\big(\theta^1,\ldots,\theta^8\big)$ yields a system of the form
\begin{gather}
\d\theta^i \equiv 0 \mod{\theta^1, \ldots, \theta^{13}}, \qquad 1 \leq i \leq 8, \nonumber\\
\begin{pmatrix} \d\theta^9 \\ \d\theta^{10} \\ \d\theta^{11} \\ \d\theta^{12} \\ \d\theta^{13} \end{pmatrix} \equiv
\begin{pmatrix}
0 & \pi_{14} & -\pi_{15} \\
0 & 0 & 0 \\
0 & \pi_{15} & \pi_{14} \\
\displaystyle{\frac{4\mu^2}{3 \big(4\lambda^2 + \mu^2\big)^2}} \pi_{16} &
\displaystyle{-\frac{2 \mu^3}{3\lambda \big(4\lambda^2 + \mu^2\big)^2}} \pi_{16}
& 0 \\
\displaystyle{\frac{16\lambda\mu}{3\big(4\lambda^2 + \mu^2\big)^2}} \pi_{17} &
\displaystyle{-\frac{8\mu^2}{3\big(4\lambda^2 + \mu^2\big)^2}} \pi_{17}
& 0
\end{pmatrix} \wedge \begin{pmatrix} \chi^1 \\ \chi^2 \\ \chi^3 \end{pmatrix}\nonumber \\
\hphantom{\begin{pmatrix} \d\theta^9 \\ \d\theta^{10} \\ \d\theta^{11} \\ \d\theta^{12} \\ \d\theta^{13} \end{pmatrix} \equiv}{} +
\begin{pmatrix} T^9_{jk} \chi^j \wedge \chi^k \vspace{1mm}\\ T^{10}_{jk} \chi^j \wedge \chi^k \vspace{1mm}\\ T^{11}_{jk} \chi^j \wedge \chi^k \vspace{1mm}\\ T^{12}_{jk} \chi^j \wedge \chi^k \vspace{1mm}\\ T^{13}_{jk} \chi^j \wedge \chi^k \end{pmatrix} \mod{\theta^1, \ldots, \theta^{13}}, \label{ugly-tableau}
\end{gather}
where
\[
\left. \begin{array}{@{}l@{}}
\pi_{14} \equiv \d w_1 \\
\pi_{15} \equiv \d w_2
\end{array} \right\} \mod{\chi^1, \chi^2, \chi^3}
\]
and
\[ \pi_{16} = \mu v_5 \pi_{14} + 4 \lambda v_4 \pi_{15} , \qquad \pi_{17} = 4\lambda v_4 \pi_{14} - \mu v_5 \pi_{15}. \]

First, consider the open set where $v_4^2 + v_5^2 \neq 0$. On this open set, the 1-forms~$\pi_{16}$ and~$\pi_{17}$ are linearly independent linear combinations of the 1-forms $\pi_{14}$ and $\pi_{15}$, and the torsion terms $T^i_{jk} \chi^j \wedge \chi^k$ cannot be absorbed except along a codimension 1 submanifold defined by a complicated polynomial equation. Moreover, the form of the tableau matrix in equation~\eqref{ugly-tableau} implies that $\cI''$ possesses a unique integral element at each point of this submanifold. This means that the restriction of $\cI''$ to this submanifold is, at best, a Frobenius system with a finite-dimensional space of integral manifolds. More likely, differentiating the equation that defines this submanifold will lead to additional relations that will further restrict the set that admits integral elements, thereby reducing the dimension of the space of integral manifolds, possibly to the point that there are no integral manifolds on which $v_4^2 + v_5^2 \neq 0$. Unfortunately, we have not been able to carry out this computation to completion, so we will content ourselves with the statement that the space of integral manifolds on which $v_4^2 + v_5^2 \neq 0$ is at most finite-dimensional.

Next, we consider the case where $v_4 = v_5 = 0$. In order to characterize integral manifolds satisfying this condition, we must go back to the system $\cI'$ on the manifold $X'$ generated by $\big(\theta^1, \ldots, \theta^8\big)$ and restrict to the codimension 2 submanifold $Y \subset X'$ defined by the equations $v_4 = v_5 = 0$. Let $\cJ$ denote the restriction of $\cI'$ to $Y$; then $\cJ$ is generated by the $1$-forms $\big(\theta^1, \theta^2, \theta^3\big)$, together with the $1$-forms
\begin{gather*}
\begin{aligned}
\theta^4 & = \d\lambda - 2 \lambda v_2 \chi^1 + \mu v_2 \chi^2 + \tfrac{1}{2}\lambda \mu \chi^3, \\
\theta^5 & = \d\mu - 2\mu v_2 \chi^1 - (\mu v_1 + 2\lambda v_2 ) \chi^2 + \big(\mu v_3+ \lambda^2 + \mu^2\big) \chi^3, \\
\theta^6 & = \varphi^1 - v_3 \chi^2 - v_1 \chi^3, \\
\theta^7 & = \varphi^2 + \tfrac{1}{2} \lambda \chi^2 + v_2 \chi^3, \\
\theta^8 & = \varphi^3 - v_2 \chi^2 + \tfrac{1}{2}\lambda \chi^3.
\end{aligned}
\end{gather*}

Computing the 2-forms $\big(\d\theta^4,\ldots,\d\theta^8\big)$ and reducing modulo the 1-forms $\big(\theta^1, \ldots, \theta^8\big)$ yields a~system for which the torsion can be absorbed. The tableau matrix has Cartan characters $s_1 = 3$, $s_2 = s_3 = 0$, but the space of integral elements is only 2-dimensional. Since $s_1 + 2 s_2 + 3 s_3 = 3 > 2$, the system $\cJ$ is not involutive, and we need to prolong. The prolongation $\cJ^{(1)}$ is the EDS on the manifold $Y^{(1)} = Y \times \bbR^2$, with coordinates $(q_1, q_2)$ on the $\bbR^2$ factor, generated by the 1-forms $\big(\theta^1,\ldots, \theta^8\big)$, together with the $1$-forms
\begin{gather*}
\theta^9 = \d v_1 + \big(\tfrac{1}{2} \lambda v_3 - v_1 v_2 \big) \chi^1 + q_1 \chi^2 + q_2 \chi^3 , \\
\theta^{10} = \d v_2 + \big(\tfrac{1}{4} \lambda^2 - v_2^2 \big) \chi^1 - \tfrac{1}{4} \lambda \mu \chi^2 + \tfrac{1}{2} \mu v_2 \chi^3 , \\
\theta^{11} = \d v_3 - \big( \tfrac{1}{2} \lambda v_1 + v_2 v_3 \big) \chi^1 - \left( q_2 - \mu v_1 + 2 \frac{\lambda}{\mu}(\lambda v_1 - 2 v_2 v_3 )\right) \chi^2 \\
\hphantom{\theta^{11} =}{} + \big( q_1 + v_1^2 + v_2^2 + v_3^2 + \mu v_3 + \tfrac{1}{4} \lambda^2 \big) \chi^3.
\end{gather*}

Computing the 2-forms $\big(\d\theta^9,\d\theta^{10},\d\theta^{11}\big)$ and reducing modulo the 1-forms $\big(\theta^1, \ldots, \theta^{11}\big)$ yields a system for which the torsion can be absorbed. The tableau matrix has Cartan characters $s_1 = 2$, $s_2 = s_3 = 0$, and the space of integral elements at each point is 2-dimensional. Since $s_1 + 2 s_2 + 3 s_3 = 2$, the system $\cJ^{(1)}$ is involutive, with integral manifolds locally parametrized by 2 functions of 1 variable.

As a result of this computation and Remark~\ref{lambda-remark}, we have the following theorem.

\begin{Theorem}\label{lambda-nonzero-function-count-theorem} Aside from a possible finite-dimensional family of solutions $($which may be empty$)$, the space of local orthonormal coframings $\big(\omega^1, \omega^2, \omega^3\big)$ on an open subset of $\bbR^3$ whose exterior derivatives $\big(\d\omega^1, \d\omega^2, \d\omega^3\big)$ are pairwise linearly dependent and do not simultaneously vanish and satisfy the additional property that
\[ \omega^1 \wedge \d \omega^1 + \omega^2 \wedge \d \omega^2 + \omega^3 \wedge \d \omega^3 \neq 0 \]
is locally parametrized by $2$ functions of $1$ variable.
\end{Theorem}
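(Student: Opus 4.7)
The plan is to read the parameter count off the involutive prolongation $\cJ^{(1)}$ constructed above, and then translate back to coframings via Proposition~\ref{prop: formulation_as_EDS-rank1} and Remark~\ref{lambda-remark}. Two branches of integral manifolds need to be separated: the ``exceptional'' branch (where $\mu=0$ or $v_4^2+v_5^2\neq 0$), and the ``generic'' branch where $v_4=v_5=0$. These account for the two halves of the theorem statement.

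First I would dispose of the exceptional contributions. The analysis above shows directly that $\mu=0$ is incompatible with integrability of $\cI$. On the open set $\{v_4^2+v_5^2\neq 0\}\subset X''$, the tableau in equation~\eqref{ugly-tableau} admits at most one integral element at each point of a proper subvariety of $X''$, so the space of integral manifolds there is at most finite-dimensional. This is precisely the possibly-empty finite-dimensional family allowed by the theorem.

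The main step is the analysis of the branch $v_4=v_5=0$. The restricted system $\cJ$ on $Y$ has characters $(3,0,0)$ with a $2$-dimensional space of integral elements, so $s_1+2s_2+3s_3 = 3 > 2$ and $\cJ$ is not involutive; I would therefore prolong to $\cJ^{(1)}$ on $Y^{(1)}=Y\times\bbR^2$. Computing $\d\theta^9, \d\theta^{10}, \d\theta^{11}$ modulo $(\theta^1,\ldots,\theta^{11})$, one checks that all torsion is absorbable and that the resulting tableau has Cartan characters $(s_1,s_2,s_3)=(2,0,0)$ with a $2$-dimensional integral element variety. Since $s_1+2s_2+3s_3 = 2$ now equals this dimension, Cartan's test succeeds and $\cJ^{(1)}$ is Cartan-involutive. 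The Cartan--K\"ahler theorem in the real-analytic category then produces integral manifolds depending on $s_1=2$ arbitrary functions of $1$ variable.

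The main obstacle is the tableau and torsion bookkeeping for $\cJ^{(1)}$; these calculations are straightforward in principle but long, so I would use the MAPLE Cartan package to verify absorption of torsion and the character sequence. Once involutivity is in hand, translating back through Proposition~\ref{prop: formulation_as_EDS-rank1}, and using Remark~\ref{lambda-remark} to identify the condition $\lambda\neq 0$ with the nonvanishing of $\omega^1\wedge\d\omega^1 + \omega^2\wedge\d\omega^2 + \omega^3\wedge\d\omega^3$, gives the stated local parametrization of orthonormal coframings by $2$ functions of $1$ variable, modulo the finite-dimensional family isolated in the first step.
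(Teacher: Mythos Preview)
Your proposal is correct and follows essentially the same approach as the paper: the theorem is stated as a direct consequence of the preceding prolongation analysis, separating the finite-dimensional exceptional branch ($v_4^2+v_5^2\neq 0$) from the generic branch $v_4=v_5=0$, where involutivity of $\cJ^{(1)}$ with characters $(2,0,0)$ yields the count of $2$ functions of $1$ variable via Cartan--K\"ahler, and the translation back to coframings uses Proposition~\ref{prop: formulation_as_EDS-rank1} and Remark~\ref{lambda-remark}.
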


One consequence of this result is that the space of integral manifolds with $\lambda \neq 0$ is strictly smaller than the space of integral manifolds with $\lambda = 0$, which we recall is locally parametrized by 1 function of 2 variables. In particular, if the function $\mbz\big(u^1, u^2\big)$ is specified in advance, there will be no solutions with $\lambda \neq 0$ for generic choices of $\mbz$. The question of precisely which choices for the function $\mbz\big(u^1, u^2\big)$ do admit solutions is an interesting one, but we shall not attempt to address it here.

\subsection[Explicit solutions with $\lambda = 0$]{Explicit solutions with $\boldsymbol{\lambda = 0}$}\label{explicit-solns-sec}

We will conclude by showing how to construct explicit solutions with $\lambda=0$ for arbitrary choices of the function $\mbz\big(u^1, u^2\big)$ that satisfy a certain nondegeneracy condition, which will be described below. First, we will show how to construct local coordinates and a local normal form for a general integral manifold of the system $\bar{\cI}$ on the manifold $\bar{X}$. We will need the following well-known fact from linear algebra:

\begin{Lemma}\label{rank2-lemma}
Let $\mbv = {}^t\hskip-1pt\big(v^1, v^2, v^3\big)$ be a nonzero vector in $\bbR^3$, and let $[\mbv]$ denote the skew-symmetric matrix
\[ [\mbv] = \begin{pmatrix} 0 & v^3 & -v^2 \\-v^3 & 0 & v^1 \\ v^2 & -v^1 & 0 \end{pmatrix}. \]
Then $[\mbv]$ has rank $2$, and its kernel is spanned by $\mbv$. Specifically, for any vector $\mbw \in \bbR^3$, we have
\[ {}^t\hskip-1pt \mbw [\mbv] = [\mbv] \mbw = 0 \]
if and only of $\mbw$ is a scalar multiple of $\mbv$.
\end{Lemma}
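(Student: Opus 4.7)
The proof is a short exercise in linear algebra, so I would keep the plan tight. My plan is to verify the kernel claim first, then deduce the rank from skew-symmetry, and finally handle the right-kernel/left-kernel equivalence using transposition.

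First, I would observe that the identity $[\mbx]\mby = -[\mby]\mbx$ listed among the identities in~\eqref{eq: crossproductidentities}, applied with $\mbx = \mby = \mbv$, immediately gives $[\mbv]\mbv = 0$. Thus $\mbv$ lies in the kernel of $[\mbv]$, so $\mathrm{rank}[\mbv] \le 2$.

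Next, since $[\mbv]$ is a real skew-symmetric matrix, its rank is even, so $\mathrm{rank}[\mbv] \in \{0,2\}$. Because $\mbv \neq 0$, at least one entry $v^i$ is nonzero, and inspection of the defining matrix shows $[\mbv]$ is then a nonzero matrix; hence the rank is exactly $2$, and the kernel is $1$-dimensional and therefore spanned by $\mbv$. This proves $[\mbv]\mbw = 0$ iff $\mbw$ is a scalar multiple of $\mbv$.

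Finally, to handle the left-kernel statement, I would use skew-symmetry: ${}^t[\mbv] = -[\mbv]$, so
\begin{gather*}
{}^t\hskip-1pt\mbw\,[\mbv] = -{}^t\hskip-1pt\mbw\,{}^t[\mbv] = -{}^t\bigl([\mbv]\mbw\bigr),
\end{gather*}
which vanishes precisely when $[\mbv]\mbw = 0$. Combining this with the previous paragraph yields the stated equivalence. There is no real obstacle here; the only thing to be careful about is citing the identity $[\mbx]\mby = -[\mby]\mbx$ from~\eqref{eq: crossproductidentities} rather than reproving it.
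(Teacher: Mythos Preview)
Your argument is correct and complete. The paper does not actually give a proof of this lemma---it is introduced as a ``well-known fact from linear algebra'' and left unproved---so your short verification (kernel via the identity $[\mbx]\mby=-[\mby]\mbx$, rank parity from skew-symmetry, and the left-kernel statement from ${}^t[\mbv]=-[\mbv]$) is a perfectly appropriate self-contained justification.
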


Let $N^3 \subset \bar{X}$ be any integral manifold of $\bar{\cI}$; in keeping with our conventions, let $\omega$ and $\phi$ denote the pullbacks to $N$ of $\xi$ and~$\alpha$, respectively. As noted above, the assumption that $\lambda=0$ implies that the map $\mba\colon V \to \SO(3)$ whose graph determines the integral manifold $N$ has rank~1. Therefore, there exists a local coordinate function~$u^1$ on $V$ such that $\mba = \mba\big(u^1\big)$, and we can write
\begin{gather}\label{lambda-zero-matrix-f-eqn}
[\phi] = \mba^{-1} \d\mba = \begin{pmatrix}
0 & g^3\big(u^1\big) & -g^2\big(u^1\big) \\ -g^3\big(u^1\big) & 0 & g^1\big(u^1\big) \\ g^2\big(u^1\big) & -g^1\big(u^1\big) & 0 \end{pmatrix} \d u^1
\end{gather}
for some smooth functions $g^i\big(u^1\big)$ on $V$ that do not all vanish simultaneously.

Let $\mbg\big(u^1\big)$ denote the $\bbR^3$-valued function $\mbg\big(u^1\big) = {}^t\hskip-1pt\big(g^1\big(u^1\big), g^2\big(u^1\big), g^3\big(u^1\big)\big)$. From equa\-tion~\eqref{lambda-zero-matrix-f-eqn}, the $\bbR^3$-valued $2$-form $\Omega$ must satisfy
\begin{gather}\label{invariant-Omega}
 \Omega = \d\omega = -[\phi] \wedge \omega = -\big[\mbg\big(u^1\big)\big] \d u^1 \wedge \omega.
\end{gather}
It follows that each of the 2-forms $\big(\Omega^1, \Omega^2, \Omega^3\big)$ must have the 1-form $\d u^1$ as a factor. By Darboux's theorem, we can find another independent coordinate function $u^2$ on $V$ such that each of the 2-forms $\Omega^i$ is a multiple of $\d u^1 \wedge \d u^2$.

Now let $u^3$ be any coordinate function on $V$ that is independent from $u^1$ and $u^2$, so that $\big(u^1, u^2, u^3\big)$ form a local coordinate system on $V$. Let $\mbu = \big(u^1, u^2, u^3\big)\colon V \to \bbR^3$ and let $U = \mbu(V) \subset \bbR^3$; then we may regard $\big(u^1, u^2, u^3\big)$ as local coordinates on~$N$ and $\mbx$ and $\mba$ as functions $\mbx\colon U \to \bbR^3$ and $\mba\colon U \to \SO(3)$.

Next, we can write
\[ \omega = \mbw_j \d u^j \]
for some $\bbR^3$-valued functions $(\mbw_1, \mbw_2, \mbw_3)$ on $U$ that are linearly independent at each point of~$U$. Then we have
\[ -[\phi] \wedge \omega = -\big[\mbg\big(u^1\big)\big] \mbw_2 \d u^1 \wedge \d u^2 -
\big[\mbg\big(u^1\big)\big] \mbw_3 \d u^1 \wedge \d u^3. \]
Since the left-hand side is a multiple of $\d u^1 \wedge \d u^2$, it follows that $\big[\mbg\big(u^1\big)\big] \mbw_3 = 0$. Since the vector~$\mbw_3$ cannot vanish, it must lie in the kernel of the rank 2 matrix $\big[\mbg\big(u^1\big)\big]$; therefore, Lemma~\ref{rank2-lemma} implies that
\[ \mbw_3 = \bar{\mu}\big(u^1, u^2, u^3\big) \mbg\big(u^1\big) \]
for some smooth, nonvanishing function $\bar{\mu}\big(u^1, u^2, u^3\big)$.
Setting
\[ \tilde{u}^3 = \int \bar{\mu}\big(u^1, u^2, u^3\big) \d u^3, \]
we have
\begin{gather*}
\mbw_3 \d u^3 = \mbg\big(u^1\big) \bar{\mu}\big(u^1, u^2, u^3\big) \d u^3
 \equiv \mbg\big(u^1\big) \d \tilde{u}^3 \mod{\d u^1, \d u^2}.
\end{gather*}
So, via the local coordinate transformation $\big(u^1, u^2, u^3\big) \to \big(u^1, u^2, \tilde{u}^3\big)$, we can arrange that $\mbw_3 = \mbg\big(u^1\big)$.

We now have
\begin{gather}\label{omega-partial-soln1}
 \omega = \mbw_1 \d u^1 + \mbw_2 \d u^2 + \mbg\big(u^1\big) \d u^3.
\end{gather}
Differentiating gives
\begin{gather}
\Omega = \d\omega = -(\mbw_2)_3 \d u^2 \wedge \d u^3 +
\big( (\mbw_1)_3 - \mbg'\big(u^1\big) \big) \d u^3 \wedge \d u^1 \nonumber\\
\hphantom{\Omega = \d\omega =}{} + \big( (\mbw_2)_1 - (\mbw_1)_2) \big) \d u^1 \wedge \d u^2,\label{diff-omega-1}
\end{gather}
where subscripts outside parentheses indicate partial derivatives with respect to the coordinates $u^i$. On the other hand, substituting~\eqref{omega-partial-soln1} into \eqref{invariant-Omega} yields
\begin{gather}\label{diff-omega-2}
\Omega = -\big[\mbg\big(u^1\big)\big] \mbw_2 \d u^1 \wedge \d u^2.
\end{gather}
Comparing \eqref{diff-omega-1} and \eqref{diff-omega-2} yields the differential equations
\begin{gather}\label{w-pdes}
(\mbw_2)_3 = 0, \qquad (\mbw_1)_3 = \mbg'\big(u^1\big), \qquad (\mbw_2)_1 - (\mbw_1)_2 = -\big[\mbg\big(u^1\big)\big] \mbw_2.
\end{gather}
The first two equations in \eqref{w-pdes} imply that $\mbw_1$, $\mbw_2$ have the form
\[ \mbw_1 = u^3 \mbg'\big(u^1\big) + \mbh_1\big(u^1, u^2\big), \qquad \mbw_2 = \mbh_2\big(u^1, u^2\big) \]
for some $\bbR^3$-valued functions $\mbh_1$, $\mbh_2$ of $\big(u^1, u^2\big)$ alone. Then the third equation in \eqref{w-pdes} implies that
\[ (\mbh_1)_2 = (\mbh_2)_1 + \big[\mbg\big(u^1\big)\big] \mbh_2. \]
The general solution to this equation is
\[ \mbh_1 = (\mbk)_1 + \big[\mbg\big(u^1\big)\big] \mbk, \qquad \mbh_2 = (\mbk)_2, \]
where $\mbk\big(u^1, u^2\big)$ is an arbitrary, smooth $\bbR^3$-valued function of $\big(u^1, u^2\big)$.

We now have
\begin{gather}
\omega = \big( \big(u^3 \mbg'\big(u^1\big) + \mbk_1\big(u^1, u^2\big) + \big[\mbg\big(u^1\big)\big] \mbk\big(u^1, u^2\big) \big) \d u^1 \nonumber\\
\hphantom{\omega =}{} + \mbk_2\big(u^1, u^2\big) \d u^2 + \mbg\big(u^1\big) \d u^3,\label{lambda-zero-define-omega}
\end{gather}
where $\mbk_i\big(u^1, u^2\big)$ denotes $\frac{\partial}{\partial u^i} \big(\mbk\big(u^1, u^2\big)\big)$.
(Note that $\mbk\big(u^1, u^2\big)$ must be chosen so that the components of $\omega$ are linearly independent at each point of~$U$.)
Moreover, we have
\begin{gather*}
\Omega = -[\phi] \wedge \omega = -\big[\mbg\big(u^1\big)\big] \mbk_2\big(u^1, u^2\big) \d u^1 \wedge \d u^2.
\end{gather*}

Now, suppose that we are given a vector $\Omega$ of closed 2-forms on $U$ whose components $\big(\Omega^1, \Omega^2, \Omega^3\big)$ are all scalar multiples of a single 2-form and do not vanish simultaneously. What conditions must $\Omega$ satisfy in order to guarantee the existence of a local coordinate system $\big(u^i\big)$ on $U$ and $\bbR^3$-valued functions $\mbg\big(u^1\big)$, $\mbk\big(u^1, u^2\big)$ so that the coframing $\omega$ given by~\eqref{lambda-zero-define-omega} satisfies the condition $\d \omega = \Omega$?

First note that, by Darboux's theorem, we can find local coordinates $\big(u^1, u^2, u^3\big)$ on $U$ such that
\[ \Omega = \mbz\big(u^1, u^2\big) \d u^1 \wedge \d u^2 \]
for some smooth, nonvanishing $\bbR^3$-valued function $\mbz\big(u^1, u^2\big)$. Moreover, under any change of coordinates of the form $\big(u^1, u^2, u^3\big) \to \big(\tilde{u}^1(u^1, u^2\big), \tilde{u}^2\big(u^1, u^2\big), u^3\big)$, each of the functions $z^i\big(u^1, u^2\big)$ is multiplied by the determinant of the Jacobian of the coordinate transformation. Thus, it is geometrically natural to regard $\mbz$ as defining a map $[\![\mbz]\!]$ into~$\bbR \bbP^2$, and this map is unchanged by coordinate transformations of this form.

The following theorem shows that a mild nondegeneracy condition on the function $[\![\mbz]\!]$ is sufficient to guarantee the existence of solutions.

\begin{Theorem}\label{big-theorem}Let $\Omega = \mbz\big(u^1, u^2\big) \d u^1 \wedge \d u^2$, where $\mbz\colon M \to \bbR^3\setminus \{\mathbf{0}\}$ is a smooth, nonvanishing function. Let $[\![\mbz]\!]\colon M \to \bbR \bbP^2$ denote the composition of $\mbz\colon M \to \bbR^3 \setminus\{\mathbf{0}\}$ with the standard projection $\bbR^3\setminus\{\mathbf{0}\} \to \bbR \bbP^2$, and suppose that either, $(i)$ the image of $[\![\mbz]\!]$ is contained in a line in $\bbR \bbP^2$, or $(ii)$ $d[\![\mbz]\!]$ is nonvanishing on $M$. Then every point of $M$ has a neighborhood $U$ on which there exist functions $\mba\colon U \to \SO(3)$, $\mbx\colon U \to \bbR^3$ such that the map $\mba$ has rank~$1$ and the components $\big(\omega^1, \omega^2, \omega^3\big)$ of the $\bbR^3$-valued $1$-form $\omega = \mba^{-1} \d\mbx$ form a local coframing on $U$ and $\d\omega = \Omega$.
\end{Theorem}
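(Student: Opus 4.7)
The plan is to construct $\omega$ explicitly in the form~\eqref{lambda-zero-define-omega}, which automatically yields $\omega = \mba^{-1}\d\mbx$ with $\mba$ of rank $1$ and a flat induced metric. In local coordinates $\bigl(u^1,u^2,u^3\bigr)$ adapted to the characteristic foliation of~$\Omega$ and Darboux-normalized so that $\Omega=\mbz(u^1,u^2)\,\d u^1\wedge\d u^2$, the derivation preceding the theorem reduces the problem to finding a nonvanishing $\mbg(u^1)$ and an $\bbR^3$-valued $\mbk(u^1,u^2)$ satisfying $[\mbg(u^1)]\,\mbk_2 = -\mbz$. By Lemma~\ref{rank2-lemma}, this is solvable pointwise for $\mbk_2$ exactly when $\mbg(u^1)\cdot\mbz(u^1,u^2)=0$; equivalently, when the curve $u^2\mapsto[\![\mbz(u^1,u^2)]\!]$ is contained in the projective line $\bbP\bigl(\mbg(u^1)^\perp\bigr)\subset\bbR \bbP^2$.

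The heart of the proof is therefore a geometric step: to choose the $(u^1,u^2)$-coordinates (keeping $u^3$ fixed, so that the Darboux form of $\Omega$ is preserved and the induced map $[\![\mbz]\!]$ is intrinsic, since changes of $(u^1,u^2)$ alone merely rescale $\mbz$ by a Jacobian) so that, for each fixed $u^1$, the curve $u^2\mapsto[\![\mbz(u^1,u^2)]\!]$ lies on a projective line that varies smoothly with $u^1$. In case~(i), the full image of $[\![\mbz]\!]$ lies on a single projective line, so any Darboux coordinates work and $\mbg$ may be taken to be a constant normal to the corresponding $2$-plane in $\bbR^3$. In case~(ii) I would argue locally according to the rank of $\d[\![\mbz]\!]$ at the base point: if the rank is $1$, I would choose $u^1$ to be a defining function for the level sets of $[\![\mbz]\!]$, so that $[\![\mbz(u^1,\cdot)]\!]$ is constant and hence lies on any projective line through its value; if the rank is $2$, so that $[\![\mbz]\!]$ is a local diffeomorphism onto a neighborhood in $\bbR \bbP^2$, I would foliate that neighborhood by arcs of the pencil of projective lines through an auxiliary point $o$ chosen outside it, pull this foliation back to $M$, and take $u^1$ to be constant on the pulled-back leaves.

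With such coordinates and a smoothly chosen $\mbg(u^1)$ in hand, the construction is completed by solving $[\mbg]\mbk_2=-\mbz$ for $\mbk_2$ (unique up to a scalar multiple of $\mbg$), integrating in $u^2$ to obtain $\mbk(u^1,u^2) = \int^{u^2}_{u^2_0}\mbk_2(u^1,s)\,\d s + \mbk_0(u^1)$ for some $\bbR^3$-valued function $\mbk_0$, and defining $\omega$ via~\eqref{lambda-zero-define-omega}; then $\d\omega=\Omega$ holds automatically. Linear independence of the components of $\omega$ at the base point $(u^1_0,u^2_0,u^3_0)$ is checked as follows: the coefficient of $\d u^3$ is the nonzero vector $\mbg$; the coefficient of $\d u^2$ is $\mbk_2$, which is nonzero and not parallel to $\mbg$ because $\mbg\times\mbk_2 = -\mbz\ne\mathbf 0$; and the coefficient of $\d u^1$ evaluates at the base point to $u^3_0\,\mbg'(u^1_0) + \mbk_0'(u^1_0) + [\mbg(u^1_0)]\,\mbk_0(u^1_0)$, where the choice of $\mbk_0(u^1_0)$ and $\mbk_0'(u^1_0)$ is free, so this coefficient can be made transverse to $\mathrm{span}(\mbg,\mbk_2)$. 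Linear independence then persists in a neighborhood by continuity, and $\mbx$ is recovered by integrating $\mba\omega$ along paths in the simply-connected neighborhood.

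The principal difficulty lies in the rank-$2$ subcase of~(ii), where the existence of a smooth local foliation of $\bbR \bbP^2$ by curve segments lying on projective lines is the essential geometric input; everything else reduces to the first-order construction of $\mbg$ and $\mbk$ above and the linear-independence verification at the base point, which are routine once that foliation is available.
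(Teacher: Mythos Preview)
Your overall strategy coincides with the paper's: reduce to finding a nonvanishing $\mbg(u^1)$ with ${}^t\mbg(u^1)\,\mbz(u^1,u^2)=0$, then solve the linear system for $\mbk_2$, integrate in $u^2$, and verify linear independence using the freedom in the ``constant of integration'' $\mbk_0(u^1)$. Case~(i) and the concluding construction match the paper essentially verbatim.

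The difference lies in case~(ii). The paper does not split on the rank of $d[\![\mbz]\!]$. It simply observes that $d[\![\mbz]\!]\ne0$ forces some ratio $r=z^i/z^j$ to have $dr\ne0$ at the base point, takes $\tilde u^1=r$ as the new first coordinate, and sets $\mbg$ accordingly (for instance $\mbg(\tilde u^1)={}^t(\tilde u^1,-1,0)$ when $r=z^2/z^1$). This is exactly your pencil construction specialized to the pencil through a coordinate vertex such as $[0{:}0{:}1]$, and it works uniformly regardless of whether $d[\![\mbz]\!]$ has rank~$1$ or~$2$.

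Your rank-$1$ subcase, as written, has a gap: you propose to take $u^1$ constant on the level sets of $[\![\mbz]\!]$, but those level sets form a smooth one-dimensional foliation only where the rank of $d[\![\mbz]\!]$ is \emph{constantly} equal to~$1$. The hypothesis $d[\![\mbz]\!]\ne0$ allows the rank to be~$1$ at the base point and~$2$ at arbitrarily nearby points, in which case the preimages of points under $[\![\mbz]\!]$ are not curves and your proposed $u^1$ is not defined. The fix is immediate---just use the ratio argument, or equivalently your own pencil construction with the auxiliary point $o$ chosen so that the pencil line through $[\![\mbz(p)]\!]$ is transverse to the image of $d[\![\mbz]\!]_p$---so the case split is unnecessary and the rank-$1$ branch can simply be dropped.
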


\begin{proof}We will show that, possibly after a coordinate transformation of the form $\big(u^1, u^2, u^3\big) \to \big(\tilde{u}^1\big(u^1, u^2\big), \tilde{u}^2\big(u^1, u^2\big), u^3\big)$, we can find $\bbR^3$-valued functions $\mbg\big(u^1\big)$, $\mbk\big(u^1, u^2\big)$ such that
\begin{gather}\label{Z-soln}
 -\big[\mbg\big(u^1\big)\big] \mbk_2\big(u^1, u^2\big) = \mbz\big(u^1, u^2\big).
\end{gather}
It is important to observe that the matrix $\big[\mbg\big(u^1\big)\big]$ necessarily
has rank~2, and equation~\eqref{Z-soln} requires that, for any fixed value of~$u^1$, the vector $\mbz\big(u^1, u^2\big)$ be contained in the image of $\big[\mbg\big(u^1\big)\big]$ for all values of $u^2$. This, in turn, is true if and only if
\begin{gather}\label{vanishing-condition}
{}^t\hskip-1pt \mbg\big(u^1\big) \mbz\big(u^1, u^2\big) = 0.
\end{gather}
There may not initially appear to exist such a function $\mbg\big(u^1\big)$ depending on $u^1$ alone, but under the hypotheses of the theorem, we can find refined local coordinates and a nonvanishing func\-tion~$\mbg\big(u^1\big)$ for which this condition holds. For instance:
\begin{itemize}\itemsep=0pt
\item If the image of $[\![\mbz]\!]$ is contained in a line in $\bbR \bbP^2$, then there exist constants $a_1, a_2, a_3 \in \bbR$, not all zero, such that
\[ a_1 z^1\big(u^1, u^2\big) + a_2 z^2\big(u^1, u^2\big) + a_3 z^3\big(u^1, u^2\big) = 0. \]
In this case, let $\mbg = {}^t\hskip-1pt (a_1 , a_2, a_3 )$.
\item If $\d[\![\mbz]\!]$ is nonvanishing on $U$, then in some neighorhood of every point, at least one of the ratios $z^i/z^j$ has no critical points. If, say, the ratio $r\big(u^1, u^2\big) = z^2/z^1$ is nonconstant and has no critical points, then in a neighborhood of any point we can make a change of coordinates of the form $\big(u^1, u^2, u^3\big) \to \big(\tilde{u}^1\big(u^1, u^2\big), \tilde{u}^2\big(u^1, u^2\big), u^3\big)$ with $\tilde{u}^1 = r\big(u^1, u^2\big)$, so that in the new coordinates we have $z^2 = u^1 z^1$. After performing this coordinate transformation, let $\mbg\big(u^1\big) = {}^t\hskip-1pt \big( u^1, -1, 0 \big)$.
\end{itemize}
Now, having constructed the desired local coordinate system and function $\mbg\big(u^1\big)$, let $\mbk_2\big(u^1, u^2\big)$ be a smooth solution of the linear system of equations~\eqref{Z-soln}. As noted above, this equation can be solved for $\mbk_2\big(u^1, u^2\big)$ precisely because the condition~\eqref{vanishing-condition} is exactly the condition required to ensure that for every $\big(u^1, u^2\big)$, the vector $\mbz\big(u^1, u^2\big)$ lies in the image of the rank 2 matrix~$\big[\mbg\big(u^1\big)\big]$.

Now let
\begin{gather}\label{construct-k}
 \mbk\big(u^1, u^2\big) = \int \mbk_2\big(u^1, u^2\big) \d u^2 + \bar{\mbk}\big(u^1\big),
\end{gather}
where the function $\bar{\mbk}\big(u^1\big)$ may be chosen arbitrarily,
and define $\omega$ by equation~\eqref{lambda-zero-define-omega}. By construction, $\omega$ satisfies $\d\omega = \Omega$ and so is the desired coframing.

The only detail remaining to check is that the components $\mbw_j$ of $\omega$ in~\eqref{lambda-zero-define-omega} are linearly independent, so that $\big(\omega^1, \omega^2, \omega^3\big)$ is a coframing on~$U$. First, observe that $\mbw_3 = \mbg\big(u^1\big)$ lies in the kernel of $\big[\mbg\big(u^1\big)\big]$. The vector $\mbw_2 = \mbk_2\big(u^1, u^2\big)$, however, must satisfy \eqref{Z-soln} and so cannot lie in the kernel of $\big[\mbg\big(u^1\big)\big]$; hence the vectors $\mbw_2$ and $\mbw_3$ are linearly independent. And since the function $\bar{\mbk}\big(u^1\big)$ in \eqref{construct-k} may be chosen arbitrarily, we can arrange for~$\mbw_1$ to be linearly independent from $\mbw_2$ and $\mbw_3$ by choosing $\bar{\mbk}\big(u^1\big)$ appropriately.

Finally, the functions $\mba\colon U \to \SO(3)$ and $\mbx\colon U \to \bbR^3$ promised by the theorem may be constructed as follows. First, the function $\mba\colon U \to \SO(3)$ is given by the solution (unique up to multiplication by a constant matrix in $\SO(3)$) of the ODE
\begin{gather}\label{a-ODE}
\mba'\big(u^1\big) = \mba\big(u^1\big) \big[\mbg\big(u^1\big)\big].
\end{gather}
Then the function $\mbx\colon U \to \bbR^3$ is given by integrating the (necessarily closed) 1-form
\[ \d\mbx = \mba \omega. \]
Note that, while constructing these functions requires solving the ODE \eqref{a-ODE}, the coframing $\omega$ can be constructed from $\Omega$ using only quadratures.
\end{proof}

The following example shows that the nondegeneracy assumptions of Theorem \ref{big-theorem} are essential; specifically, it shows how the construction above can fail near a point where $\d[\![\mbz]\!]$ vanishes.

\begin{Example}\label{critical-point-example}
For ease of notation, we will use $(u,v)$ in place of $\big(u^1, u^2\big)$ in this example.
Suppose that
\[ \mbz(u,v) = {}^t\hskip-1pt\big(1, \rho(u,v), \rho(u,v)^2\big) , \]
where
\[ \rho(u,v) = u^2 + v^2. \]
Then $\d[\![\mbz]\!](0,0) = 0$.

Suppose that there exists a $(0,0)$-centered local coordinate system $ (\tilde{u}, \tilde{v} )$ in some neighborhood $U$ of $(0,0)$ and a nonvanishing vector field $\mbg(\tilde{u})$ on $U$ such that
\begin{gather}\label{example-3-null-condition}
{}^t\hskip-1pt \mbg(\tilde{u}) \mbz(\tilde{u}, \tilde{v}) = 0.
\end{gather}
Because the function $\rho$ has a critical point at $(0,0)$ and is strictly convex, it has the property that for any nonvanishing vector field $\mbv$ on $U$,
\[ \mbv[\rho](0,0) = 0, \qquad \mbv[\mbv[\rho]](0,0) > 0. \]
In particular, we have
\[ \rho_{\tilde{v}}(0,0) = 0, \qquad \rho_{\tilde{v} \tilde{v}} (0,0) = \kappa_0 > 0, \]
where subscripts denote partial derivatives with respect to $\tilde{v}$ in the $(\tilde{u}, \tilde{v})$ coordinate system.
It follows that
\begin{gather*}
\mbz(0,0) = {}^t\hskip-1pt (1, 0, 0), \\
 \mbz_{\tilde{v} \tilde{v}}(0,0) =
{}^t\hskip-1pt \big(0, \rho_{\tilde{v} \tilde{v}}, 2\big(\rho \rho_{\tilde{v} \tilde{v}} + \rho_{\tilde{v}}^2\big) \big) \big\vert_{(0,0)} = {}^t\hskip-1pt (0, \kappa_0, 0 ), \\
\mbz_{\tilde{v} \tilde{v} \tilde{v} \tilde{v}}(0,0) =
{}^t\hskip-1pt \big(0, \rho_{\tilde{v} \tilde{v} \tilde{v} \tilde{v}},
6 \rho_{\tilde{v} \tilde{v}}^2 + 8 \rho_{\tilde{v}} \rho_{\tilde{v} \tilde{v}\tilde{v}} + 2 \rho \rho_{\tilde{v} \tilde{v} \tilde{v} \tilde{v}} \big) \big\vert_{(0,0)} =
{}^t\hskip-1pt \big(0, *, 6\kappa_0^2\big),
\end{gather*}
where the second entry of $\mbz_{\tilde{v} \tilde{v} \tilde{v} \tilde{v}}(0,0)$ is irrelevant.

Consequently, evaluating equation \eqref{example-3-null-condition} together with its $2$nd and $4$th $\tilde{v}$-de\-ri\-va\-tives at $(\tilde{u},\tilde{v}) = (0,0)$ yields three independent linear equations for the components of $\mbg(0)$. It follows that $\mbg(0) = \mathbf{0}$, and hence there is no nonvanishing vector field $\mbg(\tilde{u})$ satisfying the condition~\eqref{example-3-null-condition} for any local coordinate system $(\tilde{u}, \tilde{v})$ on any neighborhood of $(u,v) = (0,0)$.
\end{Example}

\subsection*{Acknowledgements}
Thanks to Duke University for its support via a research grant (Bryant), to the National Science Foundation for its support via research grant DMS-1206272 (Clelland), and to the Simons Foundation for its support via a Collaboration Grant for Mathematicians (Clelland).

\pdfbookmark[1]{References}{ref}
\LastPageEnding


\begin{thebibliography}{99}
\footnotesize\itemsep=0pt

\bibitem{Acharya}
Acharya A., Stress of a spatially uniform dislocation density field,
 \href{https://doi.org/10.1007/s10659-018-09717-5}{\textit{J.~Elasticity}} \textbf{137} (2019), 151--155, \href{https://arxiv.org/abs/1809.03567}{arXiv:1809.03567}.

\bibitem{BCGGG}
Bryant R.L., Chern S.S., Gardner R.B., Goldschmidt H.L., Griffiths P.A.,
 Exterior differential systems, \textit{Mathematical Sciences Research
 Institute Publications}, Vol.~18, \href{https://doi.org/10.1007/978-1-4613-9714-4}{Springer-Verlag}, New York, 1991.

\bibitem{CFB}
Ivey T.A., Landsberg J.M., Cartan for beginners: differential geometry via
 moving frames and exterior differential systems, 2nd~ed., \textit{Graduate Studies in
 Mathematics}, Vol.~175, \href{https://doi.org/10.1090/gsm/175}{Amer. Math. Soc.}, Providence, RI, 2016.

\end{thebibliography}
\end{document}